\theoremstyle{plain}
\newtheorem{thm}{\protect\theoremname}
\theoremstyle{plain}
\theoremstyle{plain}
\newtheorem{lem}[thm]{\protect\lemmaname}
\newtheorem{conj}[thm]{\protect\conjecturename}
\newtheorem{prop}[thm]{\protect\propositionname}
\theoremstyle{definition}
\newtheorem*{Def}{Definition}
\newcommand{\E}{\mathbb{E}}
\newcommand{\Bin}{\text{Bin}}
\providecommand{\corollaryname}{Corollary}
\providecommand{\lemmaname}{Lemma}
\providecommand{\theoremname}{Theorem}
\providecommand{\conjecturename}{Conjecture}
\providecommand{\propositionname}{Proposition}
\begin{document}

\title{On the subgraph query problem}
\author{Ryan Alweiss\thanks{Department of Mathematics, Princeton University, Princeton, NJ 08541, USA. Email: {\tt alweiss@math.princeton.edu}. Research supported by a NSF Graduate Research Fellowship.} \hspace{5mm} Chady Ben Hamida \hspace{5mm} Xiaoyu He\thanks{Department of Mathematics, Stanford University, Stanford,
CA 94305, USA. Email: {\tt alkjash@stanford.edu}. Research supported by a NSF Graduate Research Fellowship.}\hspace{5mm} Alexander Moreira}
\maketitle

\begin{abstract}

Given a fixed graph $H$, a real number $p\in(0,1)$, and an infinite Erd\H{o}s-R\'enyi graph $G\sim G(\infty,p)$, how many adjacency queries do we have to make to find a copy of $H$ inside $G$ with probability at least $1/2$? Determining this number $f(H,p)$ is a variant of the {\it subgraph query problem} introduced by Ferber, Krivelevich, Sudakov, and Vieira. For every graph $H$, we improve the trivial upper bound of $f(H,p) = O(p^{-d})$, where $d$ is the degeneracy of $H$, by exhibiting an algorithm that finds a copy of $H$ in time $o(p^{-d})$ as $p$ goes to $0$. Furthermore, we prove that there are $2$-degenerate graphs which require $p^{-2+o(1)}$ queries, showing for the first time that there exist graphs $H$ for which $f(H,p)$ does not grow like a constant power of $p^{-1}$ as $p$ goes to $0$. Finally, we answer a question of Feige, Gamarnik, Neeman, R\'acz, and Tetali by showing that for any $\delta < 2$, there exists $\alpha < 2$ such that one cannot find a clique of order $\alpha \log_2 n$ in $G(n,1/2)$ in $n^\delta$ queries.
\end{abstract}

\section{Introduction}

The {\it subgraph query problem}, introduced by Ferber, Krivelevich, Sudakov, and Vieira \cite{fksv}, has been the subject of recent attention in extremal combinatorics and theoretical computer science. The problem is to determine the smallest number of adaptive queries of the form ``is $(u,v) \in E(G)$?" that need to be made to an Erd\H{o}s-R\'enyi random graph $G\sim G(n,p)$ to find a copy of a given subgraph $H$ with probability at least $\frac{1}{2}$. 

Several variants of the problem appear in the literature. Ferber, Krivelevich, Sudakov, and Vieira \cite{fksv, fksv2} first studied the subgraph query problem when $H$ is a long path or cycle of order comparable to $n$, exhibiting asymptotically optimal algorithms for finding long paths and cycles. For example, as long as $p\ge \frac{\log n + \log \log n +\omega(1)}{n}$ is above the threshold for Hamiltonicity in $G(n,p)$, they showed that a Hamiltonian cycle can be found by the time one reveals $(1+o(1))n$ edges. Here and henceforth we write $\log$ for the natural logarithm and $\lg$ for the base-$2$ logarithm.

In connection with the online Ramsey number, Conlon, Fox, Grinshpun, and He \cite{cfgh} studied the case where $H = K_m$ is a fixed complete graph, $p\rightarrow 0$, and the number of vertices $n$ is allowed to be arbitrarily large. They defined the function $f(H,p)$ to be the number of queries needed to find a copy of $H$ in the countably infinite random graph $G(\infty, p)$ with probability $\frac{1}{2}$, and proved that
\begin{equation}\label{eq:cfgh}
p^{-(2-\sqrt{2})m + O(1)} \le f(K_m, p) \le p^{-\frac{2}{3} m - O(1)}.
\end{equation}

In this paper, we study the behavior of $f(H,p)$ as $p\rightarrow 0$ for an arbitrary fixed graph $H$. We will use the phrases ``build $H$ in $T$ time" and ``find $H$ in $T$ queries" interchangeably for the statement $f(H,p) \le T$.

Recall that a graph $H$ is {\it$d$-degenerate} if every subgraph of $H$ contains a vertex of degree at most $d$, and the {\it degeneracy} of $H$ is the least $d$ for which $H$ is $d$-degenerate. Equivalently, $H$ is $d$-degenerate if and only if there is an acyclic orientation of $H$ with maximum outdegree at most $d$. Degeneracy is the natural notion of sparsity in graph Ramsey theory (see e.g. the recent proof of the Burr-Erd\H{o}s conjecture by Lee \cite{cl}). 

In the subgraph query problem, a $d$-degenerate graph can be built by adding one vertex at a time so that each new vertex has degree at most $d$ at the time it is built. Since a common neighbor of $d$ given vertices can be found in $O(p^{-d})$ queries, this shows that $f(H,p) = O_H(p^{-d})$ whenever $H$ is $d$-degenerate.

Our first main result is that this trivial bound is never tight when $d\ge 2$. Define the {\it depth} $\Delta$ of a graph $H$ with degeneracy $d$ to be the smallest $\Delta$ for which there exists an acyclic orientation of $H$ with maximum outdegree at most $d$ and longest directed path of length at most $\Delta$ (we use the convention that the length of the path with $n+1$ vertices is $n$). Let $\log_t(x)$ denote the $t$-fold iterated logarithm of $x$.

\begin{thm}\label{thm:upper-general}
If $H$ is a graph with degeneracy $d\ge 2$ and depth $\Delta\ge 1$, then
\[
f(H, p) = O_H\Big(\frac{p^{-d} \log_{\Delta + 1}(p^{-1})}{\log_{\Delta} (p^{-1})}\Big).
\]
\end{thm}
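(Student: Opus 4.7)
The plan is to design an explicit level-by-level algorithm that exploits the depth structure of $H$. Fix an acyclic orientation of $H$ with maximum outdegree $d$ and longest directed path $\Delta$, and stratify $V(H)$ into levels $L_0, L_1, \ldots, L_\Delta$ so that every directed edge leaves from a strictly higher level and lands on a strictly lower level. Because $G\sim G(\infty,p)$ is infinite, I may reserve pairwise disjoint infinite vertex pools $V_v\subseteq V(G)$, one per $v\in V(H)$; this will make the final embedding automatically injective. The algorithm maintains, for each $v$ processed so far, a candidate set $C_v\subseteq V_v$ whose target size $T_k$ depends only on the level $k$ of $v$, with $T_\Delta=1$ and $T_0\ge T_1\ge\cdots$ to be chosen later as iterated logarithms of $p^{-1}$.

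Level $0$ is free: simply pick $T_0$ fresh vertices from each pool $V_v$ for $v\in L_0$. For $v\in L_k$ with $k\ge 1$, let $u_1,\ldots,u_{d_v}$ ($d_v\le d$) be its out-neighbors, all at levels strictly below $k$. To add a candidate I would sample a fresh $w\in V_v$ and run a staged test: for each $i$, in increasing order of $|C_{u_i}|$, query $w$ against $C_{u_i}$ until a neighbor is found or $C_{u_i}$ is exhausted, aborting as soon as some stage fails. In the subcritical regime $T_{k_i}p\ll 1$ the probability of surviving every stage is $\sim p^{d_v}\prod_i T_{k_i}$, while the expected query cost per sampled $w$ is dominated by the smallest pool. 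A standard amortization then shows that producing $T_k$ candidates at level $k$ costs $O\bigl(T_k\cdot p^{-d_v}\big/\prod_{i\ne\min} T_{k_i}\bigr)$, with a cap once some $T_{k_i}p$ exceeds $1$.

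Summing over all vertices of $H$ gives a total query count of $p^{-d}$ times a ratio involving the $T_k$'s, and the bound in the theorem is obtained by setting $T_k$ to grow like the appropriate iterated logarithm, say $T_k\asymp (\log_{\Delta+1-k}(p^{-1}))^{c}$ for a suitable $c=c(H)$. This choice balances each level's contribution and produces the claimed saving of $\log_\Delta(p^{-1})/\log_{\Delta+1}(p^{-1})$ over the trivial bound. Once the candidate pools are built, I extract an actual copy of $H$ by tracing, for each candidate, the specific witness it acquired at each out-neighbor pool during the staged test: pick any $w\in C_{v^\star}$ for a top-level vertex $v^\star$ and descend. I expect the main obstacle to be consistency: when two higher-level vertices of $H$ share an out-neighbor descendant, the witness trees must agree on a single element of the shared $C_u$. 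I would handle this by committing to candidates in reverse topological order while pruning the downstream candidate sets on the fly, or by inflating each $T_k$ by a constant factor depending on the sharing structure of $H$ so that a Hall-type argument guarantees a coherent injective assignment; the delicate point is to carry out this consistency step without eroding the logarithmic savings purchased by the main construction.
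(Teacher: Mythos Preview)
Your proposal has a genuine gap at exactly the point you flag: consistency. Each candidate $w\in C_v$ carries only a \emph{single} witness in each out-neighbor pool $C_{u_i}$, so the witness structure rooted at any top-level vertex is a tree, whereas the orientation DAG of $H$ generally is not. Neither of your proposed fixes works. Committing in reverse topological order and pruning collapses each $C_u$ to a singleton (the witness of the first committed vertex above it), after which every other candidate whose witness in $C_u$ was a different element becomes invalid; since witnesses are essentially uniform in $C_u$, only a $1/|C_u|$ fraction survive, wiping out your savings. A Hall-type argument is also inapplicable: the obstruction is not injectivity of the assignment but \emph{coherence of adjacencies}, and inflating $T_k$ by a constant factor does nothing against a $1/T_{k'}$ coherence probability.

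There is a sanity check confirming the gap is fatal rather than cosmetic. Ignoring consistency, your cost analysis gives, for a level-$k$ vertex with $d$ out-neighbors all at level $k-1$, a cost of $T_k\,p^{-d}/T_{k-1}^{\,d-1}$. With $d=2$ this only requires $T_k\ll T_{k-1}$, so taking $T_k=S^{\Delta-k}$ for any $S$ with $S^\Delta\le p^{-1}$ yields a \emph{polynomial} saving $f(H,p)=O(p^{-d+1/\Delta})$ for every $2$-degenerate $H$. This contradicts Theorem~\ref{thm:lower} for the triforce, so the consistency step cannot be carried out without eroding the savings to sub-polynomial.

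The paper's proof avoids the issue entirely by processing vertices in \emph{forward} degeneracy order and committing each $\phi(v_k)$ immediately, so that every cloud $C_j$ always consists of vertices adjacent to \emph{all} already-committed left-neighbors of $v_j$; consistency is maintained throughout and no extraction step is needed. The logarithmic saving comes from a different mechanism than yours: when $v_j$ is about to acquire its $d$-th committed left-neighbor $v_k$, the cloud $C_k$ still has $\ell_k$ candidates, and one searches among them for a candidate with an \emph{atypically large} (roughly $\ell_j$ instead of $1/\ell_j$) number of neighbors in $C_j$. This Poisson-tail trick, iterated along the longest directed path, is what produces the tower of logarithms; your sampling scheme never selects for high-degree candidates and so cannot reproduce it.
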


Roughly speaking, one of the main innovations is to exploit the observation that in a random graph $G(n,1/n)$, the degrees of vertices are approximately Poisson with mean $1$. Thus, the maximum degree is $\Omega(\log n/\log \log n)$ despite the fact that the average degree is constant. Repeatedly finding these vertices of exceptionally large degree allows us to find $H$ slightly faster.

We will also show that the behavior in Theorem~\ref{thm:upper-general} can be correct up to the polylogarithmic factor. Let the triforce graph be the graph obtained from the triangle $K_3$ by adding a common neighbor to each pair of vertices (see Figure~\ref{fig:triforce} in Section~\ref{sec:upper}).

\begin{thm}\label{thm:lower}
If $H$ is the triforce graph and $\ell = \frac{\log(1/p)}{2\log \log(1/p)}$, then
\[
f(H,p) = \Omega(p^{-2}/\ell^4).
\]
\end{thm}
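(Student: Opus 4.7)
The plan is to establish the lower bound via the first moment method. By Yao's minimax principle it suffices to fix a deterministic algorithm $\mathcal{A}$ making $T = c\,p^{-2}/\ell^4$ queries for some small absolute constant $c>0$, and to show that $\mathcal{A}$ succeeds with probability less than $1/2$. Let $R$ denote the revealed graph (the subset of queried pairs that are actually edges), and let $X$ count copies of the triforce contained in $R$. Since $\mathcal{A}$ succeeds only if $X\geq 1$, Markov's inequality gives $\Pr[\mathcal{A}\text{ succeeds}] \leq \mathbb{E}[X]$, and it suffices to prove $\mathbb{E}[X]=o(1)$.

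To estimate $\mathbb{E}[X]$, I would index triforce copies by the step at which their final edge is queried. For step $t$, let $q_t$ be the query, $R_{t-1}$ the revealed graph immediately before step $t$, and $X_t$ the number of triforces in $R$ whose last-queried edge is $q_t$; let $N_t$ count the ordered $4$-tuples of additional vertices that complete $q_t$ into a triforce using $8$ edges drawn from $R_{t-1}$. Because the answer to $q_t$ is an independent $\text{Bernoulli}(p)$ given the history,
\[\mathbb{E}[X_t\mid\text{history}]=p\cdot N_t,\qquad \mathbb{E}[X]=p\sum_{t=1}^{T}\mathbb{E}[N_t].\]
A short case analysis on whether $q_t$ occupies a triangle-edge or a spoke-edge position in the triforce yields $N_t \leq O\bigl(c(q_t)\cdot c_{\max}(R_{t-1})^3\bigr)$, where $c(\{u,v\})$ is the number of common neighbors of $u$ and $v$ in $R_{t-1}$ and $c_{\max}$ is its maximum over all pairs. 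The three powers of $c_{\max}$ come from selecting the three remaining vertices of the triforce, each constrained to lie in a common neighborhood inside $R_{t-1}$.

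Summing, $\sum_t c(q_t)$ is bounded by the number of wedges in $R$, namely $\sum_w \binom{d_R(w)}{2}\leq d_{\max}(R)\cdot|R|$. A Chernoff bound gives $|R| = O(Tp)$ with high probability, and combined with the main probabilistic estimate $d_{\max}(R)=O(\ell)$ and the trivial inequality $c_{\max}\leq d_{\max}$, we obtain
\[\mathbb{E}[X]\leq p\cdot O(\ell^3)\cdot O(\ell\cdot Tp) = O(Tp^2\ell^4),\]
which is $o(1)$ exactly when $T=o(p^{-2}/\ell^4)$, matching the claimed bound.

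The crux of the argument is therefore the degree bound $d_{\max}(R)=O(\ell)$. Conditional on the queried graph $Q$, each $d_R(v)$ is distributed as $\Bin(k_v,p)$ where $k_v$ is the number of queries incident to $v$, and a Poisson tail combined with a union bound over the $\leq 2T=O(p^{-2})$ touched vertices delivers the desired $O(\ell)$ bound whenever $\max_v k_v p = O(1)$. An adaptive $\mathcal{A}$ might in principle violate this by concentrating queries on a single ``hub'' vertex, so I would handle that case separately: if some $k_v$ greatly exceeds $1/p$, then the revealed graph is essentially a star centered at that hub and therefore contains essentially no triangles, let alone triforces, so $X=0$ with high probability. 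Combining the balanced and concentrated regimes yields the full lower bound.
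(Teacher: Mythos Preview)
Your first-moment framework is the right one and is essentially what the paper does via the quantity $t(H,p,N)$; the codegree bound $c_{\max}(R)=O(\ell)$ is also correct and is exactly the paper's Lemma on common neighbours in $G(2N,p)$. The argument breaks, however, at the degree bound $d_{\max}(R)=O(\ell)$, and your proposed ``star'' fallback does not rescue it.

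The claim $d_{\max}(R)=O(\ell)$ is simply false for the algorithms you need to rule out. The near-optimal strategy in the paper's upper bound spends $\Theta(b^2\ell^{-1/2})$ queries building large neighbourhoods of two adjacent hub vertices $x,y$, giving $d_R(x),d_R(y)=\Theta(b\ell^{-1/2})$; moreover $x$ and $y$ share $\Theta(\ell)$ common neighbours in $R$. So the revealed graph is nowhere near a star, it contains many triangles, and your dichotomy ``either $\max_v k_v p=O(1)$ or $R$ is essentially a star'' collapses. Plugging this realistic $d_{\max}$ into your chain of inequalities yields $\mathbb{E}[X]\le p\cdot c_{\max}^3\cdot d_{\max}\cdot |R|$ of order $b\cdot\mathrm{polylog}(b)$, which is useless. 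The slack is not in the framework but in the two crude steps $N_t\lesssim c(q_t)c_{\max}^3$ and $\sum_t c(q_t)\le d_{\max}|R|$ taken together: for the hub-heavy strategies that matter, the first bound is wildly pessimistic, but you have no mechanism to exploit that.

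The paper avoids this by never invoking a maximum-degree bound. It peels off edges one at a time using the recursion $t(H,p,N)\lesssim p\max_e t(H\setminus e,p,N)$, reducing to the eight seven-edge subgraphs of the triforce. Six of these are handled by two applications of the codegree bound (removing two vertices of degree $\ge 2$ to reach $P_3$). The remaining case, which corresponds exactly to the hub phenomenon above, requires a separate ``diamond lemma'': for the diamond $D$ with a marked degree-$2$ vertex $u$, the maximum over all choices of image of $u$ of the expected number of copies of $D$ through that vertex is $O(\ell^3)$. This is proved by a last-edge decomposition combined with the codegree bound, and it is precisely the device that controls what happens around a high-degree hub. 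Your argument would need an analogue of this lemma; the $d_{\max}$ shortcut cannot substitute for it.
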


Note that the triforce is $2$-degenerate, so Theorems~\ref{thm:upper-general} and~\ref{thm:lower} together prove that $f(H,p) = o(p^{-2})$ and $f(H,p) = \Omega(p^{-2+\varepsilon})$ for every $\varepsilon > 0$. This is the first example of a graph for which it is known that $f(H,p)$ does not grow like a power of $p^{-1}$.

The question of querying for subgraphs in random graphs was also studied by Feige, Gamarnik, Neeman, R\'acz, and Tetali \cite{fgnrt}, and by R\'acz and Schiffer in the related planted clique model \cite{rs}. Feige, Gamarnik, Neeman, R\'acz, and Tetali restricted their attention to the balanced random graph $G(n,\frac{1}{2})$ and asked for the minimum number of queries needed to find a clique of order $(2-o(1))\lg n$ (which approaches the clique number) with probability at least $1/2$. For $\delta < 2$, define $\alpha_\star(\delta)$ to be the supremum over $\alpha \le 2$ for which a clique of order $\alpha \lg n$ can be found with probability at least $1/2$ in at most $n^\delta$ queries for all $n$ sufficiently large. They showed under the additional assumption that only a bounded number of rounds of adaptiveness are used that $\alpha_\star(\delta) < 2$ for all $\delta < 2$, and asked if this could be proved unconditionally.

Our last theorem answers this question affirmatively. We are grateful to Huy Pham~\cite{p} for communicating to us the main idea of the proof.

\begin{thm}\label{thm:cliques}
For all $2/3<\delta<2$,
\[
\alpha_\star(\delta)\le1+\sqrt{1-\frac{(2-\delta)^{2}}{2}}<2.
\]
\end{thm}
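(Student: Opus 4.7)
The plan is to prove the theorem by induction on $n$, using a two-phase decomposition of any hypothetical algorithm in the inductive step. Let $\mathcal{A}$ be an algorithm finding $K_k$ in $G(n,1/2)$ with probability $\ge 1/2$ using $Q = n^\delta$ queries, where $k = \alpha \lg n$ and $\beta := \alpha - 1$. Our target inequality is $\beta^2 + \tfrac{(2-\delta)^2}{2} \le 1$, so we assume for contradiction that $\delta < 2 - \sqrt{2 - 2\beta^2}$ (which forces $\beta < 1/\sqrt 2$). Fix a small parameter $\mu > 0$ (to be sent to $0$ at the end), set $j = \mu \lg n$, and let $\tau_j$ be the first time $\mathcal{A}$'s confirmed-edge graph contains a $K_j$; denote this sub-clique by $S$.

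Conditional on $S$ being a clique in $G$, the induced subgraph on the common neighborhood $N(S)$ is distributed as $G(m, 1/2)$ with $m$ concentrated around $n\cdot 2^{-j} = n^{1-\mu}$. After a reduction (see the obstacle below), $\mathcal{A}$'s queries after $\tau_j$ must produce a $K_{k-j}$ inside this sub-instance. I would apply the theorem inductively at the strictly smaller size $m$, with $\alpha' := (k - j)/\lg m = (\alpha-\mu)/(1-\mu)$ and $\delta'$ defined by $Q - \tau_j = m^{\delta'}$: the inductive hypothesis gives $\alpha' \le 1 + \sqrt{1 - \tfrac{(2-\delta')^2}{2}}$, which rearranges to
\[
\delta' \;\ge\; 2 - \sqrt{\,2 - \tfrac{2\beta^2}{(1-\mu)^2}\,}.
\]
Combined with $\delta' \le \delta/(1-\mu)$ (from $Q - \tau_j \le Q$), this yields
\[
\delta \;\ge\; 2(1-\mu) \,-\, \sqrt{\,2(1-\mu)^2 - 2\beta^2\,}.
\]
Sending $\mu \to 0^+$ (e.g.\ taking $\mu = 1/\log\log n$, so the slack vanishes as $n\to\infty$), the right side tends to $2 - \sqrt{2 - 2\beta^2}$, contradicting the assumption and completing the inductive step.

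The main technical obstacle is making the sub-instance reduction rigorous. A priori, the queries of $\mathcal{A}$ after $\tau_j$ are not confined to $N(S)$, and the witness $S$ itself is not unique. I plan to handle this by a conditioning argument combined with a union bound over the roughly $\binom{n}{j} 2^{-\binom{j}{2}}$ candidate copies of $K_j$ in $G$: for each candidate $S$, the sub-algorithm consisting only of $\mathcal{A}$'s queries with both endpoints in $N(S)$ must still succeed in finding $K_{k-j}$ with constant probability, up to a $(1+o(1))$ loss absorbed by the limit. The condition $\delta > 2/3$ in the theorem emerges naturally: for the inductive hypothesis at the sub-instance to give a nontrivial bound, we need $\delta' > 2/3$, which via the inversion $\delta'\ge 2-\sqrt{2-2(\alpha'-1)^2}$ translates to $\alpha' > 4/3$; since in the contrapositive scenario $\alpha > f(\delta)$ with $f(\delta) = 1 + \sqrt{1-(2-\delta)^2/2}$, this holds precisely when $\delta > 2/3$, because $f(2/3) = 4/3$.
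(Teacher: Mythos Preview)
Your approach is entirely different from the paper's, and as written it has a genuine gap: the induction does not close. In the step at size $n$, you invoke the hypothesis at size $m=n^{1-\mu}$ to obtain $\delta \ge 2(1-\mu) - \sqrt{2(1-\mu)^2 - 2\beta^2}$, but to complete the induction you would need the stronger conclusion $\delta \ge 2 - \sqrt{2-2\beta^2}$. Differentiating $2(1-\mu)-\sqrt{2(1-\mu)^2-2\beta^2}$ at $\mu=0$ gives $-2+2/\sqrt{2-2\beta^2}$, which is \emph{negative} whenever $\beta<1/\sqrt 2$; thus for small $\mu>0$ the bound you derive is strictly weaker than the target, and the inductive step fails throughout the range $\alpha<1+1/\sqrt 2$ (which covers most of $2/3<\delta<2$). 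The move ``send $\mu\to 0$'' is not legitimate inside a single inductive step: for each $n$ you are committed to a fixed $\mu_n>0$ and never recover the lost slack. There is also no base case. (Your parenthetical ``which forces $\beta<1/\sqrt 2$'' is not correct either; the contradiction hypothesis only forces $\beta>1/3$.)

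Independently, the sub-instance reduction you flag is not under control. The first $K_j$ the algorithm confirms need not be a subset of the eventual output $K_k$, so the remaining $K_{k-j}$ need not lie in $N(S)$. If instead $S$ is taken to be a $j$-subset of the output, it is unknown until the end; the sequence of queries with both endpoints in $N(S)$ is then not an adaptive algorithm on $G(m,\tfrac12)$ with success probability $\ge 1/2$, and queries made before $\tau_j$ may already lie in $N(S)$, invalidating the budget $Q-\tau_j$. Your proposed union bound ``over roughly $\binom{n}{j}2^{-\binom j2}$ candidates'' is an expectation, not a count; the actual number of $j$-sets is $\binom{n}{j}=2^{\Theta((\lg n)^2)}$, far too large to absorb.

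For contrast, the paper gives a direct one-shot argument with no recursion on $n$. It tracks the conditional-probability weight $w(U,t)=2^{-\binom{|U|}{2}+e_t(U)}$ summed over $k$-sets $U$ whose revealed edges already contain a matching of size $m$. Peeling off the last matching edge gives a recursion in $m$ that yields $\E[w_{k,m}(t)]\le t^{m} n^{k-2m}2^{-\binom k2+m(m-1)}$; taking $m=\tfrac{2-\delta}{2}\lg n$ then gives the theorem. The matching restriction is precisely the device that replaces the crude count $n^{k}$ by $t^{m}n^{k-2m}$, and that trade is what your reduction-to-a-subinstance idea is trying to achieve indirectly.
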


The proof is an adaptation of the lower bound proof for (\ref{eq:cfgh}) by \cite{cfgh} to take the size of the vertex set into account. The exact value of $\alpha_\star(\delta)$ remains open for all $\delta$, and the best known lower bound is $\alpha_\star(\delta) \ge 1 + \frac{\delta}{2}$ when $1\le \delta < 2$ (see \cite[Lemma 6]{fgnrt}).

\vspace{3mm}

\noindent {\bf Organization.} In Section~\ref{sec:upper}, we describe a new algorithm for finding any $d$-degenerate graph and prove that it achieves the runtime described in Theorem~\ref{thm:upper-general}. In Section~\ref{sec:lower}, we give a new argument for proving lower bounds on $f(H,p)$, proving Theorem~\ref{thm:lower}. In Section~\ref{sec:clique} we give a short proof of Theorem~\ref{thm:cliques}, using a variation of the methods in \cite{cfgh}. Finally, Section~\ref{sec:closing} highlights a few of the many open questions that remain about $f(H,p)$.

We will write $b=p^{-1}$ for the expected number of queries needed to find a single edge in $G(\infty, p)$. No attempt will be made to optimise the implicit constants in any of our proofs. We use $A \lesssim B$ to mean $A=O(B)$.  For the sake of clarity of presentation, we systematically omit floor and ceiling signs whenever they are not crucial.

\section{Upper bounds}\label{sec:upper}

\subsection{An Illustrative Example}

 As mentioned in the introduction, there is a straightforward algorithm for finding any $d$-degenerate graph $H$ in $O_H(b^d)$ time. In this section we prove Theorem~\ref{thm:upper-general} by providing a new algorithm that beats the trivial algorithm by an iterated logarithmic factor.

We begin by illustrating how the algorithm works with a specific $2$-degenerate graph.

\begin{Def}

The \emph{triforce} is the graph on $6$ vertices and $9$ edges obtained from the triangle $K_3$ by adding a common neighbor to each pair of vertices.
\end{Def}

\begin{figure}[h]\label{fig:triforce}
\centering
\begin{tikzpicture} [rotate = -90]
\GraphInit[vstyle=Classic]
\SetVertexNoLabel
\SetGraphUnit{1}
\tikzset{VertexStyle/.append style={fill=black, minimum size=2pt, inner sep=0pt}}
\Vertex{a}
\SOEA(a){b}
\NOEA(a){c}
\SOEA(b){d}
\SOEA(c){e}
\NOEA(c){f}
\Edges(a,b,c,a)  
\Edges(e,d,b,e,c,f,e)
\end{tikzpicture}
\caption{The triforce graph.}
\end{figure}
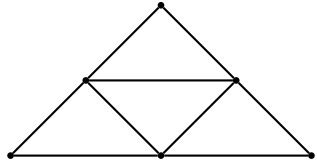

The key step in building the triforce quickly is to build a large book.

\begin{Def}
The {\it book} $B_{d,t}$ is the graph on $d+t$ vertices obtained by removing the edges of a clique $K_t$ from a complete graph $K_{d+t}$. The $t$ vertices of the removed clique are called the {\it pages} of the book and the remaining $d$ vertices are called its {\it spine}.
\end{Def}
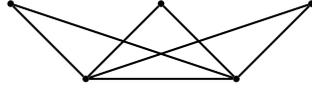
\begin{figure}[h]\label{fig:book}
\centering
\begin{tikzpicture} [rotate = -0]
\GraphInit[vstyle=Classic]
\SetVertexNoLabel
\SetGraphUnit{1}
\tikzset{VertexStyle/.append style={fill=black, minimum size=2pt, inner sep=0pt}}
\Vertex{a}
\NOEA(a){2}
\SOEA(2){b}
\NOWE(a){1}
\NOEA(b){3}
\Edges(1,a,b,1)
\Edges(2,b,3)
\Edges(2,a,3)
\end{tikzpicture}
\caption{The book $B_{2,3}$.}
\end{figure}
Note that when $d$ and $t$ are fixed positive integers, $B_{d,t}$ is $d$-degenerate and thus we have $f(B_{d,t}, p) = O_t(b^d)$. The key observation is that this can be improved substantially even if we allow $t$ to grow slowly as $p$ tends to $0$.

\begin{lem}\label{lem:book}
If $d\ge 2$ and $\ell=\frac{\log b}{2\log \log b}$, then $f(B_{d,\ell},p)=O(b^d \ell^{-1/2})$.
\end{lem}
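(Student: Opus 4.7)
The plan is to construct the spine $v_1, \ldots, v_d$ greedily, one vertex at a time, while maintaining a decreasing chain $N_1 \supseteq N_2 \supseteq \cdots \supseteq N_{d-1}$ of common-neighborhood sets with the invariant $N_i \subseteq N(v_1) \cap \cdots \cap N(v_i)$ and target sizes $|N_i| = \Theta(b^{d-i}/\sqrt{\ell})$. Concretely, fix a large constant $C=C(d)$, choose disjoint fresh vertices $v_1$ and $T_1$ with $|T_1|=Cb^d/\sqrt{\ell}$, query every pair $(v_1,w)$ for $w\in T_1$, and set $N_1=N(v_1)\cap T_1$. For $i=2,\ldots,d-1$, pick any $v_i\in N_{i-1}$ (which is automatically adjacent to $v_1,\ldots,v_{i-1}$ by the invariant), query every pair $(v_i,w)$ for $w\in N_{i-1}\setminus\{v_i\}$, and set $N_i=N(v_i)\cap N_{i-1}$. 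Finally, query every pair inside $N_{d-1}$ and look for a vertex $v_d\in N_{d-1}$ with at least $\ell$ neighbors in $N_{d-1}$; then $\{v_1,\ldots,v_d\}$ is the spine of a copy of $B_{d,\ell}$ and $\ell$ of those in-set neighbors serve as the pages.

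The total query cost is $|T_1|+\sum_{i=2}^{d-1}|N_{i-1}|+\binom{|N_{d-1}|}{2}$. Since every mean $\mathbb{E}|N_i|=\Theta(Cb^{d-i}/\sqrt{\ell})$ is $\omega(1)$, Chernoff shows that each $|N_i|$ concentrates around its mean with probability $1-o(1)$. The linear terms then form a geometric series dominated by $|T_1|=O(b^d/\sqrt{\ell})$, while the quadratic term $\binom{|N_{d-1}|}{2}=O(b^2/\ell)$ is absorbed into $O(b^d/\sqrt{\ell})$ because $d\geq 2$.

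The main obstacle is verifying that some $v_d\in N_{d-1}$ has at least $\ell$ neighbors inside $G[N_{d-1}]$. Each candidate's in-set degree is $\Bin(|N_{d-1}|-1,p)$ with mean $\mu\sim C/\sqrt{\ell}\ll 1$, so Stirling gives
\[
\Pr\bigl[\Bin(|N_{d-1}|-1,p)\geq\ell\bigr]\;\gtrsim\;\frac{1}{\ell!}\Bigl(\frac{C}{\sqrt{\ell}}\Bigr)^{\ell},
\]
and the expected number of qualifying candidates is
\[
|N_{d-1}|\cdot\frac{(C/\sqrt{\ell})^\ell}{\ell!}\;=\;\exp\!\Bigl(\tfrac{1}{4}\log b+O(\ell)\Bigr),
\]
which diverges as $b\to\infty$. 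The calibration $\ell=\log b/(2\log\log b)$ is chosen precisely so that $\ell\log\ell\sim\tfrac{1}{2}\log b$, making the negative contributions $(\ell/2)\log\ell$ (from $(C/\sqrt{\ell})^\ell$) and $\ell\log\ell$ (from $\ell!$) consume exactly $\tfrac{3}{4}\log b$, leaving a positive exponent; taking $\ell$ materially larger would flip the sign. Because different candidates' in-set degrees share at most a single edge, their pairwise covariances are $O(p)$ and a standard second moment argument upgrades the first-moment estimate to the conclusion that a qualifying $v_d$ exists with probability $1-o(1)$. Combined with the Chernoff bounds on each $|N_i|$, the total failure probability is $o(1)$, so the algorithm succeeds and uses $O(b^d/\sqrt{\ell})$ queries as claimed.
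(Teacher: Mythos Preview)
Your approach is essentially the same as the paper's: build a $(d-1)$-clique, accumulate a set of $\Theta(b/\sqrt{\ell})$ common neighbors, query all pairs inside it, and locate a vertex of in-set degree at least $\ell$. The only structural difference is that the paper first builds the $(d-1)$-clique via the trivial $O(b^{d-2})$ algorithm and then finds common neighbors in one batch, whereas you build the clique iteratively while shrinking the neighborhood; both are fine.

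There is, however, a genuine gap in your last step. You bound the pairwise covariances of the candidates' \emph{in-set degrees} by $O(p)$, but the second moment method needs control of $\mathrm{Cov}(Y_u,Y_v)$ where $Y_u=\mathbf{1}[\deg(u)\ge\ell]$. Even granting the crude bound $\mathrm{Cov}(Y_u,Y_v)=O(p)$ for the indicators, one gets $\sum_{u\ne v}\mathrm{Cov}(Y_u,Y_v)\lesssim n^2p\approx b/\ell = b^{1-o(1)}$, while $(\mathbb{E}Y)^2\approx b^{1/2+o(1)}$, so Chebyshev does \emph{not} give $\Pr[Y=0]\to 0$. The argument can be rescued with the sharper identity $\mathrm{Cov}(Y_u,Y_v)=p(1-p)\bigl(\Pr[\mathrm{Bin}(n-2,p)=\ell-1]\bigr)^2$, after which $\sum\mathrm{Cov}/(\mathbb{E}Y)^2=O(p\ell^3)\to 0$; but this computation must actually be carried out, and the $O(p)$ bound you cite is not enough.

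The paper avoids this issue by splitting the common neighborhood $S$ into two halves $S_1,S_2$ of size $r=b\ell^{-1/2}$ and letting $X_i$ be the number of neighbors that the $i$-th vertex of $S_1$ has in $S_2$. The $X_i$ are then genuinely i.i.d.\ $\mathrm{Bin}(r,p)$, so independence replaces the second moment calculation and the existence of some $X_i\ge\ell$ follows immediately from $r\cdot\Pr[X_i\ge\ell]\to\infty$.
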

\begin{proof}
We will exhibit an algorithm which finds $B_{d,\ell}$ in $G(\infty, p)$ with constant probability (w.c.p.) in $O(b^d\ell^{-1/2})$ time. The algorithm has three steps.

First, we find w.c.p. $d-1$ vertices $v_1,\ldots, v_{d-1}$ of the spine forming a clique in $O(b^{d-2})$ time, which is possible because $K_{d-1}$ is $(d-2)$-degenerate. Assume this step succeeds.

Next, we build a large pool $S$ of common neighbors of $v_1,\ldots, v_{d-1}$, which will serve as candidates for the remaining vertex $v_d$ of the spine and for the pages of the book. In $d-1=O(1)$ queries we can check a single new vertex $u$ to see if it is a common neighbor of $v_1,\ldots, v_{d-1}$, and $u$ has a probability $p^{d-1}$ of being such a common neighbor. We check a total of $4b^d \ell^{-1/2}$ possible $u$, and each common neighbor successfully found is added to $S$. Since the outcomes of all queries are independent, $|S|$ is distributed like the binomial random variable $\text{Bin}(4b^d\ell^{-1/2}, p^{d-1})$ with mean $4b\ell^{-1/2}$, so w.c.p. $|S| \ge 2b\ell^{-1/2}$.

For the last step, assuming the previous two steps succeed, we will find a star $K_{1, \ell}$ contained in $S$. Along with vertices $v_1,\ldots ,v_{d-1}$ already chosen, this forms the desired book.

To find this star, remove vertices from $S$ until it has size exactly $2b\ell^{-1/2}$, and then query all pairs of vertices in $S$ in $O(b^2 \ell^{-1})$ time. The induced subgraph on $S$ is just an Erd\H{o}s-R\'enyi random graph $G(2b\ell^{-1/2}, p)$. It suffices to show that w.c.p. there exists a vertex of degree at least $\ell$ therein. This fact is a consequence of the observation that the degrees are approximately Poisson.

To give a quick proof of this fact, divide $S$ into two sets $S_1, S_2$ of size $r = b\ell^{-1/2}$, let $u_1$,\ldots, $u_{r}$ be the vertices of $S_1$, and let $X_i$ be the number of neighbors of $u_i$ in $S_2$. Then $\{X_i\}_{i=1}^{r}$ are $r$ i.i.d. random variables distributed like $\text{Bin}(r, p)$, so 
\[
\mathbb{P}[X_i \ge \ell] \ge \binom{r}{\ell} p^{\ell}(1-p)^{r-\ell} \ge \frac{(r - \ell)^\ell p^\ell (1-p)^{r}}{\ell!}.
\]
As $p\rightarrow 0$, we can bound $r - \ell > b\ell^{-1/2}/2$, $(1-p)^{r} \rightarrow 1$, and $\ell! < \ell^\ell$. Thus,
\[
\mathbb{P}[X_i \ge \ell] \ge \Omega\Big(\frac{1}{2^\ell \ell^{3\ell/2}}\Big).
\]
When $\ell = \frac{\log b}{2\log \log b}$, this fraction is certainly $\Omega(b^{-\frac{4}{5}})$. In particular, since there are $r = b^{1-o(1)}$ independent random variables $X_i$, w.c.p. some $X_i$ is at least $\ell$, as desired.

Letting the vertex of degree $\ell$ be the last vertex $v_d$ of the book's spine and its $\ell$ neighbors in $S$ be the pages of the book, we have found a copy of $B_{d,\ell}$ w.c.p. in $O(b^d)$ total queries, as desired.
\end{proof}

We are now ready to prove a stronger version of Theorem~\ref{thm:upper-general} when $H$ is the triforce.

\begin{thm}\label{thm:upper-triforce}
If $H$ is the triforce graph and $\ell=\frac{\log b}{2\log \log b}$, then $f(H,p) = O(b^2\ell^{-\frac{1}{2}})$.
\end{thm}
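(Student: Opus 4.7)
My plan is to use Lemma~\ref{lem:book} to build a book $B_{2,\ell}$ with spine $\{a,b\}$ and $\ell$ pages $P$ in $O(b^2 \ell^{-1/2})$ queries, and then exploit the auxiliary neighborhoods generated as a by-product to find, among the pages, a vertex $c$ that extends to a full triforce. Once any page is designated as $c$, the inner triangle $\{a,b,c\}$ of the triforce is present, and a second page can serve as the outer vertex $d$ common to $\{a,b\}$. The real difficulty is producing outer vertices $e$ common to $\{b,c\}$ and $f$ common to $\{a,c\}$ without paying $\Theta(b^2)$ queries for each, since a direct common-neighbor search would be too slow.

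First, I would apply Lemma~\ref{lem:book} with degeneracy parameter $2$, remembering the auxiliary pool $S = S_1 \sqcup S_2 \subseteq N(a)$ from its proof, with $b \in S_1$, $P \subseteq S_2$, and the full edge-set of the bipartite graph $S_1 \times S_2$ already queried. Second, I would build a fresh pool $S_b \subseteq N(b)$ of size $|S_b| = \Theta(b \ell^{-1/2})$, chosen disjoint from $S \cup \{a\}$, at an additional cost of $O(b^2 \ell^{-1/2})$ queries. Third, I would query every pair in $P \times S_b$; this adds only $|P| \cdot |S_b| = O(b \ell^{1/2})$ queries, which is negligible compared to $b^2 \ell^{-1/2}$.

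After these queries, for each $c \in P$ the sets $F_c := N(c) \cap (S_1 \setminus \{b\})$ and $E_c := N(c) \cap S_b$ are fully known. If both are nonempty, then picking $f \in F_c$, $e \in E_c$, and any $d \in P \setminus \{c\}$ yields six distinct vertices (by the pairwise disjointness of $S_1$, $S_2$, $S_b$, $\{a\}$, and since $f \ne b$) inducing all nine triforce edges with inner triangle $abc$ and outer vertices $d,e,f$. Standard Bernoulli estimates give $\mathbb{P}[F_c \ne \emptyset] \approx |S_1| \, p \approx \ell^{-1/2}$ and $\mathbb{P}[E_c \ne \emptyset] \approx |S_b| \, p \approx \ell^{-1/2}$, and these two events are independent since $S_1 \setminus \{b\}$ and $S_b$ are disjoint. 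Hence each $c$ is ``good'' with probability $\Theta(\ell^{-1})$, and the expected number of good pages is $|P| \cdot \Theta(\ell^{-1}) = \Theta(1)$.

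The main subtlety is turning this expectation $\Theta(1)$ into an $\Omega(1)$ lower bound on the probability that at least one good page exists. This should follow because the indicator events ``$c$ is good'' for distinct $c \in P$ involve disjoint random edges of $G(\infty,p)$, namely the edges joining $c$ to $(S_1\setminus\{b\}) \cup S_b$, and these edges are independent of the conditioning used to select $S$, $b$, and $P$ (since $b$ was chosen via its edges from $S_1$ to $S_2$, which are disjoint from the test edges for $F_c$ and $E_c$). Therefore the indicators are independent Bernoullis and $\mathbb{P}[\text{at least one good page}] = 1 - (1 - \Theta(\ell^{-1}))^{|P|} = \Omega(1)$. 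Summing up, the total query complexity is $O(b^2 \ell^{-1/2})$, with the constant success probability boostable to $1/2$ by constantly many independent repetitions.
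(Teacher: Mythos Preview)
Your overall strategy is the same as the paper's: build a book $B_{2,\ell}$, then attach two neighbor-pools of the spine vertices to the set of pages and look for a page with a neighbor in each pool. The paper calls the spine $x,y$, builds \emph{fresh} sets $S_x\subseteq N(x)$ and $S_y\subseteq N(y)$ of size $b\ell^{-1/2}$, queries $Z\times S_x$ and $Z\times S_y$, and argues just as you do that some page has a neighbor in each.

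Where your proposal deviates, and where the gap lies, is in recycling the pool $S_1$ from the proof of Lemma~\ref{lem:book} in place of a fresh $S_a\subseteq N(a)$. You assert that the test edges for $F_c$, namely $\{c\}\times(S_1\setminus\{b\})$, are independent of the conditioning used to select $b$ and $P$, because ``$b$ was chosen via its edges from $S_1$ to $S_2$.'' But that is not how $b$ is selected. In the book lemma the full bipartite graph $S_1\times S_2$ is queried and $b$ is (say) the first vertex of $S_1$ with $\ge\ell$ neighbors in $S_2$; this conditions on the degrees into $S_2$ of \emph{every} earlier vertex of $S_1$, and those degrees are computed from exactly the edges $\{u\}\times S_2\supseteq\{u\}\times P$ for $u\in S_1\setminus\{b\}$ that you want to treat as fresh Bernoullis. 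After this conditioning the edges defining $F_c$ are no longer independent $\mathrm{Bernoulli}(p)$ variables, so neither your claim $\mathbb{P}[F_c\ne\emptyset]\approx\ell^{-1/2}$ nor the independence of the indicators across $c\in P$ is justified as stated.

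The fix is cheap and is precisely what the paper does: discard $S_1$ and spend $O(b^2\ell^{-1/2})$ more queries to build a fresh $S_a\subseteq N(a)$ of size $b\ell^{-1/2}$, disjoint from everything used so far, then query $P\times S_a$ (another $O(b\ell^{1/2})$ queries). Now the edges $\{c\}\times S_a$ and $\{c\}\times S_b$ are genuinely unrevealed and independent across $c\in P$, and your probabilistic argument goes through verbatim. Alternatively, one can rescue your recycling idea by only using the \emph{untouched} tail of $S_1$ after the sequential search for $b$ halts, but this needs an extra argument that the halting index is at most $r/2$ with constant probability; building a fresh pool is cleaner and costs nothing asymptotically.
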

\begin{proof}
We exhibit an algorithm for finding $H$ w.c.p. in $O(b^2\ell^{-\frac{1}{2}})$ time.

Using Lemma~\ref{lem:book} with $d=2$, build a copy of $B_{2,\ell }$.  Let $x$ and $y$ be the two vertices of its spine and let $Z$ be its pages.  In $O(b^2\ell^{-\frac{1}{2}})$ time we can w.c.p. find two sets of vertices $S_x, S_y$, each of size $b\ell^{-\frac{1}{2}}$, so that everything in $S_x$ is adjacent to $x$ and everything in $S_y$ is adjacent to $y$. Now, we will query all pairs between $S_x$ and $Z$ as well as all pairs between $S_y$ and $Z$.  This takes only $O(b\ell^{1/2})$ time, which is negligible.

We claim that w.c.p. there exist $x' \in S_x$, $z \in Z$, and $y' \in S_y$ so that $x'\sim z$ and $z\sim y'$. This follows because w.c.p., $\Theta(\ell^{1/2})$ vertices of $Z$ have a neighbor in $S_x$, and among these vertices w.c.p. at least one has a neighbor in $S_y$. Now, let $z'$ be any common neighbor of $x$ and $y$ in $Z$ other than $z$. It follows that the six vertices $x,y,z,x',y',z'$ form a triforce, and we have found it in $O(b^2\ell^{-\frac{1}{2}})$ queries w.c.p., as desired.
\end{proof}

\subsection{The general upper bound}

Roughly speaking, the main trick in the proofs of Lemma~\ref{lem:book} and Theorem~\ref{thm:upper-triforce} is that we can find vertices of much larger than average degree in a random graph with constant average degree. We will iterate this trick many times to prove the general statement in Theorem~\ref{thm:upper-general}.  

We will construct an arbitrary $d$-degenerate graph $H$ recursively. If the vertices of $H$ are ordered $v_1,\ldots, v_n$ in the degeneracy order, the algorithm will maintain a ``cloud" of candidates $C_i$ for the image of vertex $v_i$ which shrinks as the algorithm progresses. On step $i$, the algorithm chooses $v_i$ from $C_i$ and then shrinks the clouds corresponding to neighbors of $v_i$ to stay consistent with this choice.

\begin{proof}[Proof of Theorem~\ref{thm:upper-general}.]

Let $H$ be a graph on $n$ vertices with degeneracy $d\ge 2$ and depth $\Delta$. Order its vertices $v_1,\ldots, v_n$ so that each $v_i$ has at most $d$ neighbors $v_j$ with $j<i$, and the longest left-to-right path $v_{i_0},\ldots, v_{i_r}$ with $i_0<\cdots < i_r$ has length $r=\Delta$. Let $\Delta_i$ be the length (in edges) of the longest left-to-right-path ending at $v_i$, so that $\Delta_i \le \Delta$ for all $i$. Finally, define 
\[
L(x) = \frac{\log x}{3n \log \log x}
\]
and $\ell_i = L^{(\Delta_i)}(b)$ is obtained from $b$ by iterating $L$ $\Delta_i$ times.

We describe an algorithm for finding an injection $\phi$ from $H$ to $G(\infty, p)$ in a series of rounds, assuming $p$ is sufficiently small. There are many points at which the algorithm may fail. However, each round succeeds with probability $\Omega_H(1)$ conditional on the success of all previous rounds, and there are $n = O_H(1)$ rounds, so the entire algorithm succeeds with $\Omega_H(1)$ probability. The algorithm can then be repeated a number of times depending only on $H$ until its success probability reaches $\frac{1}{2}$; this only changes the implicit constant in $f(H,p)$.

We begin by setting aside $n$ disjoint sets (``clouds") $C_1, \ldots, C_n$ which will change throughout the algorithm.  We initialize these to $C_1^{(0)},\ldots, C_n^{(0)}$ of order $|C_i^{(0)}|=b^d/\ell_i$, so that $C_i^{(0)}$ is the set of candidates for $\phi(v_i)$. We proceed in $n$ rounds, so that $C_j^{(k)}$ will refer to the state of cloud $C_j$ after round $k$.  After the $k$th round we will have nonempty disjoint sets $C_j^{(k)}$, and we always have $C_j^{(k-1)} \supset C_j^{(k)}$. In the round $k$ a number of queries are made to decide the value of $\phi(v_k)\in C_k^{(k-1)}$. Thus $C_k^{(k)}$ is the singleton $\{\phi(v_k)\}$ and $C_k$ remains a singleton until the end. For each $j$ with $j>k$ and $v_j\sim v_k$, the set $C_j^{(k-1)}$ is updated to a subset $C_j^{(k)}$ consisting of all elements of $C_j^{(k-1)}$ adjacent to $\phi(v_k)$. We say that a vertex $v_j$ is {\it living} after round $k$ if $j > k$, and {\it dead} otherwise.

Two properties are maintained. The first is that after round $k$, for any $i\le k$ and $j\le n$ and any $u_i \in C_i^{(k)}$ and $u_j \in C_j^{(k)}$, $u_i\sim u_j$ if $v_i \sim v_j$. In other words, the adjacency relations are correct within the dead vertices and between the dead vertices and the clouds $C_i^{(k)}$ for the living ones.

The second property is that the size of the set $C_j^{(k)}$ must be
\[
c_j^{(k)} \coloneqq \begin{cases}
    \frac{b^{d-m}}{\ell_j} & \text{if $v_j$ is living and has $m < d$ dead left-neighbors,} \\
    \ell_j & \text{if $v_j$ is living and has exactly $d$ dead left-neighbors,} \\
    1 & \text{if $v_j$ is dead.}
    \end{cases}
\]

The queries on round $k$ are made to guarantee two properties. First, on round $k$, vertices are thrown out of $C_k^{(k-1)}$ until it has exactly $\ell_k$ vertices (this is possible because $c_k^{(k-1)} \ge \ell_k$ when $b$ is sufficiently large). Then, consider the $j$ so that $j>k$ and $v_j \sim v_k$. If such a $v_j$ has exactly $d-1$ dead left-neighbors, then $j$ is called {\it active} on round $k$, and otherwise $j$ is called {\it inactive}. For each active $j$, all pairs in $C_k^{(k-1)} \times C_j^{(k-1)}$ are queried.

Each round is divided into an {\it active portion}, which happens first, and then an {\it inactive portion}.

The active portion of round $k$ succeeds if a candidate $u_k \in C_k^{(k-1)}$ is found to have at least $c_j^{(k)}$ neighbors in $C_j^{(k-1)}$ for all the active $j$. One such candidate $u_k$ is picked for $\phi(v_k)$ and $C_j^{(k)}$ is chosen to be exactly $c_j^{(k)}$ neighbors of $u_k$ in $C_j^{(k-1)}$. 

Then, for all inactive $j$, all pairs $\{u_k\} \times C_j^{(k-1)}$ are queried. The inactive portion of round $k$ succeeds if after these queries a total of $c_j^{(k)}$ neighbors are found for $u_k$ in $C_j^{(k-1)}$ for each of the inactive $j$'s as well. We say that the round succeeds if both the active and inactive portions succeed. The algorithm only continues past round $k$ if round $k$ succeeds.

By induction on $k$, the algorithm maintains all the required properties and produces a valid injection $\phi:H\rightarrow G(\infty, p)$ if it succeeds on every round. It remains to show that the probability of success on each round is $\Omega_H (1)$.

For each $u \in C_k^{(k-1)}$ and $j>k$ for which $v_j \sim v_k$, let $d_j (u)$ be number of neighbors $u$ has in $C_j^{(k-1)}$. Note that $d_j(u)$ is distributed like $\text{Bin}(c_j^{(k-1)}, p)$, since each vertex of $C_j^{(k-1)}$ is adjacent to $u$ independently with probability $p$.

Suppose $j$ is active in round $k$, so that $c_j^{(k-1)} = b/\ell_j$. This time, we get
\[
\mathbb{P}[d_j(u) \ge c_j^{(k)}] = \mathbb{P}[\Bin(b/\ell_j, p) \ge \ell_j] \ge \binom{b/\ell_j}{\ell_j}p^{\ell_j}(1-p)^{b/\ell_j - \ell_j}.
\]
Using the facts that $1-x \ge e^{-2x}$ for all $x \in [0, \frac{1}{2}]$ and that $\ell_j\rightarrow \infty$ as $p\rightarrow 0^+$, we see that $(1-p)^{b/\ell_j - \ell_j} \ge e^{-2/\ell_j} \rightarrow 1$. Also, $\binom{a}{b} \ge (a/b)^b$ for all $a\ge b\ge 1$, and so
\[
\mathbb{P}[d_j(u) \ge c_j^{(k)}] \ge \binom{b/\ell_j}{\ell_j}p^{\ell_j}(1-p)^{b/\ell_j - \ell_j} = \Omega(\ell_j^{-2\ell_j}).
\]

There are at most $n$ total $j$, so taking a product over all active $j$, we arrive at a lower bound
\[
\mathbb{P}[d_j(u) \ge c_j^{(k)}\text{for all active $j$}] \ge \Omega_H (\ell_j^{-2n\ell_j}).
\]
Since each $u\in C_k^{(k-1)}$ is individually a successful candidate for $\phi(v_k)$ with this probability, and these are $\ell_k$ independent events, it follows that
\[
\mathbb{P}[\text{Active portion round $k$ succeeds}] \ge \Omega_H (\min(1, \ell_k \ell_j^{-2n \ell_j})).
\]
Finally, we observe that for every $j$ active in round $k$, $\Delta_j \ge \Delta_k + 1$ since every left-to-right path ending at $k$ extends to a longer one ending at $j$. Thus, $\ell_j \le L(\ell_k)$, and the function $L$ was chosen so that $L(x)^{2n L(x)} \le x$ for $x$ sufficiently large. It follows that the active portion of round $k$ succeeds with probability $\Omega_H(1)$, as desired.

Now we look at the inactive $j$ in round $k$. Then $c_j^{(k)} = p c_j^{(k-1)}$, so
\[
\mathbb{P}[d_j(u_k) \ge c_j^{(k)}] = \mathbb{P}[\Bin(c_j^{(k-1)}, p) \ge p c_j^{(k-1)}] = \Omega(1).
\]
Thus, conditional on the success of the active portion of round $k$, the inactive portion succeeds with probability $\Omega_H(1)$ as well.

We have now shown that the algorithm, iterated $O_H(1)$ times, succeeds with probability $\frac{1}{2}$. It remains to bound the total number of queries made. In the active portion of each round, queries are only made between sets $C_k^{(k-1)}$ and $C_j^{(k-1)}$ if $j$ is relevant, which implies that $c_j^{(k)} = b/\ell_j$. Also, elements of $C_k^{(k-1)}$ were thrown out until it had size exactly $\ell_k = O(b)$, so the number of queries made in the active portion of any round is at most $O(b^2/L^{(\Delta)}(b)) = O(b^d/L^{(\Delta)}(b))$.

In the inactive portion of each round, queries are made between a single vertex $u_k$ and sets $C_j^{(k-1)}$ of size at most $b^d/\ell_j$ each. Thus, the number of queries made in the inactive portion of any round is also $O(b^d/L^{(\Delta)}(b))$.

Since there are at most $n = O_H(1)$ rounds and at most $n$ choices of $j$ involved in each round, we find that
\[
f(H, p) = O_H\Big(\frac{b^d}{L^{(\Delta)}(b)}\Big) = O_H\Big(\frac{p^{-d} \log_{\Delta + 1}(p^{-1})}{\log_{\Delta} (p^{-1})}\Big).
\]
as desired.
\end{proof}

\section{Lower bounds}\label{sec:lower}

\subsection{Preliminaries}

In this section, we will prove lower bounds for $f(H,p)$.  Because $N$ queries necessarily involve at most $2N$ vertices, it suffices to prove lower bounds for finding a copy of $H$ in $G(2N,p)$ rather than in $G(\infty,p).$  Following \cite{cfgh}, we will lower bound the number of queries it takes to build a copy of $H$ by showing that the expected number of copies of $H$ we can build in some amount of time is not too large.

\begin{Def}
If $H$ is a graph without isolated vertices, define $t(H,p,N)$ to be the maximum (over all querying strategies) expected number of copies of $H$ that can be found in $G(\infty,p)$ in $N$ queries. Since we are working on $G(2N,p)$, if $H = H'\cup\{v_1,\ldots, v_t\}$ has $t$ isolated vertices, define $t(H,p,N)\coloneqq (2N)^t \cdot t(H',p,N)$.
\end{Def}

If we show that we cannot build so many copies of $H$ (in expectation) in some time, this gives us a lower bound on how long it takes to build a single copy of $H$.

\begin{lem}\label{lemma:LB1}
If $N\ge f(H,p)$, then
\[
f(H,p) \cdot t(H,p,N) \ge N/4.
\]
\end{lem}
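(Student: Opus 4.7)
The plan is to bootstrap the (near-)optimal algorithm for finding a single copy of $H$ into one that finds many disjoint copies, and then use the definition of $t(H,p,N)$ as an upper bound on the expected count.

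Concretely, fix an algorithm $A$ achieving $f(H,p) = T$ queries with success probability at least $1/2$. Assuming $N \ge T$, I would construct a new algorithm $A'$ that runs $A$ a total of $k \coloneqq \lfloor N/T \rfloor$ times, where the $i$th run is constrained to a fresh block of vertices $V_i \subset \N$ disjoint from all previous blocks $V_1,\ldots,V_{i-1}$. Each run uses at most $T$ queries and touches at most $2T$ vertices, so in total $A'$ uses at most $kT \le N$ queries and involves at most $2kT \le 2N$ vertices. In particular, $A'$ is a valid querying strategy on $G(2N,p)$.

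The key observation is that because the blocks $V_i$ are pairwise disjoint, the queries performed during different runs concern disjoint edge sets of $G(\infty,p)$, which are mutually independent. Hence the events ``run $i$ finds a copy of $H$'' are independent, each occurring with probability at least $1/2$. By linearity of expectation, the expected number of copies of $H$ that $A'$ finds is at least $k/2$. Since $\lfloor N/T\rfloor \ge N/(2T)$ whenever $N\ge T$, this gives
\[
t(H,p,N) \ge \frac{k}{2} \ge \frac{N}{4T} = \frac{N}{4 f(H,p)},
\]
which rearranges to the desired inequality $f(H,p)\cdot t(H,p,N) \ge N/4$.

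The only mild subtlety is the case where $H$ has isolated vertices: the inflated definition $t(H,p,N)=(2N)^t t(H',p,N)$ still dominates any count produced by an honest algorithm run on a vertex set of size $\le 2N$, so the bound applies uniformly and no separate argument is needed. I do not anticipate a serious obstacle — the main point is simply ensuring independence across the repeated runs by using disjoint vertex blocks and verifying that the floor does not cost more than a factor of $2$.
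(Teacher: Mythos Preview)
Your proposal is correct and is essentially the same argument as the paper's: iterate the optimal single-copy algorithm $\lfloor N/f(H,p)\rfloor$ times on disjoint vertex blocks, use linearity of expectation to get at least $\lfloor N/f(H,p)\rfloor/2$ expected copies, and bound the floor by $N/(2f(H,p))$. The paper's version is just terser and omits the side remarks about independence and isolated vertices.
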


Thus, upper bounds on $t(H,p,N)$ will yield lower bounds on $f(H,p)$.  The proof of Lemma~\ref{lemma:LB1} is straightforward, but we include it for completeness.

\begin{proof}
By definition, there exists a strategy which finds $H$ with probability $\frac{1}{2}$ in $f(H,p)$ queries. Given $N$ queries, we can iterate this strategy $\lfloor N/f(H,p)\rfloor$ independent times on disjoint vertex sets. By linearity of expectation, this implies 
\[
t(H,p,N) \ge \Big\lfloor\frac{N}{f(H,p)}\Big\rfloor \cdot \frac{1}{2} \ge \frac{N}{4f(H,p)}.\qedhere
\]
\end{proof}

Thus, it suffices to produce upper bounds on $t(H,p,N)$. Fortunately, we can recursively bound $t(H,p,N)$ in terms of $t(H',p,N)$ for some subgraphs $H'$.  The following bounds are proved in \cite{cfgh}.

\begin{lem}[\cite{cfgh}]\label{lemma:LB2}
If $H$ is any graph, $p\in (0,1)$, and $N \ge p^{-1 -\varepsilon}$ for some $\varepsilon > 0$, then the following inequalities hold:
\begin{align}
t(H,p,N) &\le \min\limits_{e \in E(H)} t(H\backslash e,p,N) \label{eq:t-rec-1} \\
t(H,p,N) &\lesssim p \max\limits_{e\in E(H)} t(H\backslash e, p, N) \label{eq:t-rec-2} \\
t(H,p,N) &\lesssim pN \min\limits_{e\in E(H)} t(H\backslash\{u,v\}, p, N), \label{eq:t-rec-3}
\end{align}
where $u,v$ are the vertices of $e$ in (\ref{eq:t-rec-3}). In the latter two inequalities, the implicit constants are allowed to depend only on $H$.
\end{lem}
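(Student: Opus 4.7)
All three inequalities are proved in \cite{cfgh}; my plan is to re-derive them by analyzing a fixed strategy $S$ that achieves $t(H,p,N)$. Let $X_H$ denote the number of copies of $H$ in the final revealed graph, let $\{a_q,b_q\}$ be the pair queried at step $q$, and let $B_q$ be the event ``query $q$ returned yes.'' Inequality (\ref{eq:t-rec-1}) is essentially tautological: $X_H \le X_{H\setminus e}$ pointwise, because each copy of $H$ in the revealed graph contains a copy of $H\setminus e$ on the same vertex set.

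For (\ref{eq:t-rec-2}), I would use a ``last-queried edge'' decomposition. For each copy $\phi$ of $H$, let $e^*(\phi)\in E(H)$ be the edge of $H$ whose image is queried latest; by pigeonhole there exists $e\in E(H)$ with $\mathbb{E}[X_e]\ge \mathbb{E}[X_H]/|E(H)|$, where $X_e$ counts copies with $e^*(\phi)=e$. Writing $X_e = \sum_q \mathbf{1}[B_q]\cdot D_q$, with $D_q$ the number of embeddings $\phi:H\to G$ satisfying $\phi(e)=\{a_q,b_q\}$ and with all other edges of $\phi$ revealed strictly before step $q$, conditional independence gives $\mathbb{E}[\mathbf{1}[B_q]\,D_q] = p\cdot\mathbb{E}[D_q]$, since query $q$ is fresh and has probability $p$ of being ``yes'' independently of the history (which determines $D_q$). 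The key combinatorial point is that each copy of $H\setminus e$ in the final revealed graph can be charged to at most one query (namely the one, if any, that queried its image of $\{u,v\}$), so $\sum_q \mathbb{E}[D_q]\lesssim t(H\setminus e,p,N)$ by the definition of $t$. Combining these estimates yields (\ref{eq:t-rec-2}).

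For (\ref{eq:t-rec-3}), fix any edge $e=\{u,v\}$ in advance (so that the resulting bound holds for the minimizing $e$), and parameterize each copy $\phi$ of $H$ by the step $q(\phi)$ at which its edge $\phi(e)$ is queried. This gives $X_H = \sum_q \mathbf{1}[B_q]\,F_q$, where $F_q$ counts copies $\phi$ of $H$ with $\phi(e)=\{a_q,b_q\}$. Restricting such $\phi$ to $H\setminus\{u,v\}$ yields $F_q \lesssim Z$, where $Z$ is the number of copies of $H\setminus\{u,v\}$ in the final revealed graph. Factoring out the Bernoulli$(p)$ from each $\mathbf{1}[B_q]$ and using $\mathbb{E}[Z]\le t(H\setminus\{u,v\},p,N)$ gives $\mathbb{E}[X_H]\lesssim pN\cdot t(H\setminus\{u,v\},p,N)$.

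The main technical obstacle is the dependence between $Z$ and $B_q$ in (\ref{eq:t-rec-3}): since $Z$ depends on the outcome of every query (including step $q$), the identity $\mathbb{E}[\mathbf{1}[B_q]\,Z]=p\cdot\mathbb{E}[Z]$ does not literally hold, and in fact FKG gives the reverse inequality. I would resolve this by conditioning on the history through step $q-1$ (which fixes $\{a_q,b_q\}$) and bounding $\mathbb{E}[Z\mid B_q]$ by applying the definition of $t(H\setminus\{u,v\},p,N)$ to the ``continuation strategy'' that simulates the remaining $N-q$ queries of $S$ starting from the current revealed graph augmented with the planted edge $\{a_q,b_q\}$. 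The hypothesis $N\ge p^{-1-\varepsilon}$ is what guarantees that planting a single edge inflates the expected count of $H\setminus\{u,v\}$ copies by only a constant factor, which can then be absorbed into the $\lesssim$ constant.
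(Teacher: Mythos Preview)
The paper does not prove this lemma itself; it simply cites \cite{cfgh}. Your arguments for (\ref{eq:t-rec-1}) and (\ref{eq:t-rec-2}) are correct and standard: the first is the pointwise inequality $X_H\le X_{H\setminus e}$, and in the second the factorisation $\mathbb E[\mathbf 1[B_q]D_q]=p\,\mathbb E[D_q]$ is legitimate precisely because $D_q$ is $\mathcal F_{q-1}$-measurable.

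Your plan for (\ref{eq:t-rec-3}), however, has a real gap. The ``continuation strategy'' you invoke does not start from an empty graph: conditional on $\mathcal F_{q-1}$ and $B_q=1$, it begins with \emph{all} of the edges revealed in the first $q$ steps, not just the single ``planted'' edge $\{a_q,b_q\}$. Hence you cannot appeal to the definition of $t(H\setminus\{u,v\},p,N)$ to bound $\mathbb E[Z\mid \mathcal F_{q-1},B_q=1]$, and your remark that one planted edge costs only a constant factor does not address the potentially $\Theta(Np)$ other edges already present. (For a fixed history $h$ this conditional expectation can be far larger than $t(H\setminus\{u,v\},p,N)$.) The clean fix is to drop the per-$q$ conditioning entirely: from $F_q\lesssim Z$ you get $X_H\lesssim Z\cdot E$ where $E=\sum_q\mathbf 1[B_q]$ is the total number of edges found. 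Because each query is to a fresh pair, the $B_q$ are i.i.d.\ Bernoulli$(p)$ regardless of adaptivity, so $E\sim\text{Bin}(N,p)$, and Chernoff gives $\mathbb P[E>2Np]\le e^{-\Omega(Np)}$. The hypothesis $N\ge p^{-1-\varepsilon}$ makes this super-polynomially small, so the trivial bound $Z\cdot E\le (2N)^{|V(H)|}$ handles that event, while on its complement $\mathbb E[X_H]\lesssim Np\cdot\mathbb E[Z]\le Np\cdot t(H\setminus\{u,v\},p,N)$.
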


In general, the bounds of Lemma~\ref{lemma:LB2} are not tight. In certain cases, we will improve these bounds using the crucial observation that large enough sets of vertices in a random graph have few common neighbors. For any vertex subset $U\subseteq V(G)$ of a graph $G$, write $d(U)$ for the number of common neighbors of every vertex in $U$.

\begin{lem}
\label{lem:poisson}
Let $\ell=\frac{\log b}{2\log \log b}$, let $k, n\ge 2$ be absolute constants, and let $G=G(2N,p)$. 
\begin{enumerate}
\item
If $p^kN \lesssim 1$, then there exists $C>0$ so that 
\[
\mathbb{P}[\max_{|U|=k} d(U)>C\ell]<p^n, 
\]
where the maximum is taken over all $k$-subsets $U$ of $V(G)$. 
\item
If $p^kN=(1/N)^{\Omega(1)}$, then there exists $C>0$ so that 
\[
\mathbb{P}[\max_{|U|=k} d(U)>C]<p^n.
\]
\end{enumerate}
\end{lem}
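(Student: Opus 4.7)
The plan is to bound, for each fixed $k$-subset $U \subseteq V(G)$, the distribution of $d(U)$ using a binomial tail estimate, and then take a union bound over all $\binom{2N}{k}$ such $U$. For a fixed $U$, every vertex $v \in V(G) \setminus U$ is independently a common neighbor with probability $p^k$, so $d(U) \sim \Bin(2N-k, p^k)$, and for any threshold $t \ge 1$,
\[
\mathbb{P}[d(U) \ge t] \le \binom{2N}{t}\, p^{kt} \le \left(\frac{2eNp^k}{t}\right)^{t}.
\]
It remains to choose $t$ and $C$ appropriately in each case and compare the resulting exponent to the target $p^n$.

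For part 1, the assumption $p^kN \lesssim 1$ forces $2eNp^k = O(1)$ and $N = O(p^{-k})$. Taking $t = C\ell$ in the binomial estimate yields
\[
\mathbb{P}[d(U) \ge C\ell] \le \left(\frac{O(1)}{C\ell}\right)^{C\ell} \le \ell^{-C\ell/2}
\]
for $C$ large and $p$ small. The definition $\ell = \log b/(2 \log\log b)$ is calibrated precisely so that $\log(\ell^{C\ell/2}) = (1+o(1))(C/4)\log b$, which gives $\ell^{-C\ell/2} \le p^{C/5}$ for small $p$. Multiplying by $(2N)^k = O(p^{-k^2})$ produces a total failure probability of $O(p^{C/5 - k^2})$, which is at most $p^n$ once $C$ is chosen sufficiently large in terms of $n$ and $k$.

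For part 2, the hypothesis gives a constant $\alpha > 0$ with $p^k N \le N^{-\alpha}$, equivalently $N \le p^{-k/(1+\alpha)}$. Taking $t = C$ in the binomial tail and union bounding over $\binom{2N}{k} \le (2N)^k$ choices of $U$ gives a failure probability of order $O(N^{k+C} p^{kC})$. Substituting the upper bound on $N$ yields
\[
N^{k+C} p^{kC} \le p^{-k(k+C)/(1+\alpha) + kC} = p^{k(C\alpha - k)/(1+\alpha)},
\]
which is at most $p^n$ as soon as $\alpha C \ge k + n(1+\alpha)/k$, again achievable by taking $C$ large enough depending on $k$, $n$, and $\alpha$.

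The only step requiring genuine care is the exponent check in part 1, where the super-polynomial decay $\ell^{-C\ell/2}$ must overcome the polynomial union-bound factor $(2N)^k = O(p^{-k^2})$; this is exactly the calibration encoded in the iterated-logarithm shape of $\ell$. Everything else reduces to routine binomial tail estimates and monotonic exponent manipulations.
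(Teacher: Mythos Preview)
Your proof is correct and follows essentially the same approach as the paper: bound $\mathbb{P}[d(U)\ge t]$ by the binomial tail estimate $\bigl(2eNp^{k}/t\bigr)^{t}$ and then union bound over the $(2N)^{k}$ choices of $U$. The paper's version is terser (it simply writes $(O(1/t))^{t}$ in part~1 and $N^{-\Omega(t)}$ in part~2 and declares the union bound wins for large enough $C$), whereas you carry out the exponent calculations explicitly, but there is no substantive difference in the argument.
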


\begin{proof}
For an arbitrary set of $k$ vertices $U$, note that $d(U)\sim \text{Bin}(2N-k,p^{k})$ as there are $2N-k$ other vertices of $G(2N,p)$ and each vertex has a $p^{k}$ chance of being adjacent to all members of $U$. Hence, we find that
\begin{equation}\label{eq:codegree}
\mathbb{P}[d(U) \ge t] =  \mathbb{P}[\text{Bin}(2N-k,p^{k}) \ge t] \leq \binom{2N-k}{t} p^{tk} \leq \Big(\frac{2Ne}{t}p^{k}\Big)^t.
\end{equation}

If $p^{k}N \lesssim 1$, then (\ref{eq:codegree}) implies $\mathbb{P}[d(U) \ge t] \le (O(1/t))^t$. Next we take a union bound over at most $(2N)^k$ choices of $U$, which shows that, if we take $t=C\ell$ for a sufficiently large $C$ depending on $n$,
\[
\mathbb{P}[\max_{|U|=k}d(U) \ge t] \le (O(1/t))^t \cdot N^k<p^n.
\]
This proves the first part of the lemma.

If $p^kN=(1/N)^{\Omega(1)}$, then (\ref{eq:codegree}) implies the stronger bound $\mathbb{P}[d(U) \ge t]=N^{-\Omega(t)}$. For any fixed $n$, if $C$ is a large enough constant, the probability that $\max_{|U|=k}d(U)>C$ will be below $N^{-\Omega(C)}N^k \le p^n$ by the union bound. \end{proof}

The power of Lemma~\ref{lem:poisson} is that the final graph that we find after $N$ queries is a subgraph of $G(2N,p)$, so we can bound the number of common numbers of any vertex set $U$ of constant size without even seeing the graph. It allows us to prove new upper bounds on $t(H,p,b^d)$.

\begin{lem}
\label{lem:reduction}
Let $\ell=\frac{\log b}{2\log \log b}$, let $H$ be a graph, let $v\in V(H)$, and let $H' = H \setminus \{v\}$.
\begin{enumerate}
\item
If $d(v) = d$, then 
\[
t(H,p,b^d) \lesssim \ell t(H',p,b^d). 
\]

\item
If $d(v) > d$, then 
\[
t(H,p,b^d) \lesssim t(H',p,b^d).
\]
\end{enumerate}
\end{lem}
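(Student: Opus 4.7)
The plan is to decompose each found copy of $H$ into a found copy of $H' := H\setminus\{v\}$ together with a vertex playing the role of $v$, which must be a common neighbor of the images of $N_H(v)$; the number of such extensions is controlled by the codegree bound from Lemma~\ref{lem:poisson}.

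First I would fix an arbitrary querying strategy making $N = b^d$ queries, and observe that in each realization $X \le MY$ pointwise, where $X$ is the number of copies of $H$ found, $Y$ is the number of copies of $H'$ found, and $M := \max_{|U|=k} d(U)$ is the maximum codegree over $k$-subsets $U \subseteq V(G(2N,p))$, with $k := d_H(v)$. This holds because every $H$-copy restricts to an $H'$-copy by deleting the image of $v$, and the deleted vertex must be a common neighbor in the revealed graph of the $k$ vertices in the image of $N_H(v)$. To convert this into a bound in expectation, I would split over the event $E = \{M \le M_0\}$ for an appropriate threshold $M_0$, using the trivial deterministic bound $X \le (2N)^{|V(H)|}$ on $E^c$:
\[
\E[X] \le M_0 \cdot \E[Y] + (2N)^{|V(H)|} \cdot \mathbb{P}[E^c].
\]

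For part 1, $k = d$ gives $p^k N = 1$, so Lemma~\ref{lem:poisson}(1) provides $M_0 = O(\ell)$ with $\mathbb{P}[E^c] \le p^n$ for any constant $n$. For part 2, $k \ge d+1$ gives $p^k N \le p = (1/N)^{1/d}$, which is $(1/N)^{\Omega(1)}$, so Lemma~\ref{lem:poisson}(2) provides $M_0 = O(1)$ with the same $p^n$ decay. In either case, picking $n$ larger than $d\cdot|V(H)|$ kills the error term, and $\E[Y] \le t(H', p, b^d)$ by the defining supremum; taking the supremum over strategies on the left then yields both bounds.

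The only technical point is handling the low-probability event where $M$ could exceed $M_0$. This is not a genuine obstacle, however, because the polynomial tail $p^n$ in Lemma~\ref{lem:poisson} (with $n$ free to choose) comfortably dominates the polynomial worst-case bound $(2N)^{|V(H)|}$ on $X$; everything else reduces to the counting decomposition above.
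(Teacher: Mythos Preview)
Your proposal is correct and follows essentially the same route as the paper: decompose each found copy of $H$ into a found copy of $H'$ together with a common neighbor of the image of $N_H(v)$, bound the latter via $\max_{|U|=k} d(U)$ using Lemma~\ref{lem:poisson}, and split the expectation according to whether this maximum exceeds the threshold ($C\ell$ or $C$), absorbing the bad event via the trivial bound $(2N)^{|V(H)|}$ against the $p^n$ tail. The paper writes the decomposition as $t(H,p,b^d)\le \E\big[(\max_{|U|=k} d(U))\sum_{|U|=k} H'(U)\big]$ and then performs exactly the same case split, so your $X\le MY$ pointwise inequality and subsequent argument are a clean repackaging of the same proof.
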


\begin{proof}
Let $k= d(v)$, and let the neighbors of $v$ in $H$ be $v_1, \ldots, v_k$. Fix a query strategy that maximizes $t(H,p,b^d)$. 

For any subset $U=\{u_1, \ldots, u_k\}$ of $k$ vertices of the final graph $G \subset G(2b^d,p)$ that is found, let $H'(U)$ be the number of maps $H'\rightarrow G$ so that for all $1 \le i \le k$, $v_i$ maps to $u_i$. Then we have that
\begin{equation}\label{eq:t-H-expectation}
t(H,p,b^d)=\mathbb{E}\left[\sum_{|U|=k}d(U)H'(U)\right]\le \mathbb{E}\left[\left(\max_{|U|=k}d(U)\right) \left( \sum_{|U|=k} H'(U) \right)\right],
\end{equation}
where the sum is taken over all $k$-sets of vertices $U$.

Assume $d(v)=d$. By the first part of Lemma~\ref{lem:poisson}, there is a large constant $C=C(H)$ so that 
\[
\mathbb{P}\left[\max_{|U|=k}d(U)>C\ell\right]<p^{d|V(H')|+1}.
\]

We will break up the expectation in (\ref{eq:t-H-expectation}) depending on the size of $\max_{|U|=k} d(U)$. If $\max_{|U|=k} d(U) \le C\ell$, the contribution to right side of (\ref{eq:t-H-expectation}) is $O(\ell t(H',p,b^d))$.  Now, it holds that $\max_{|U|=k} d(U)>C\ell$ with probability at most $p^{d|V(H')|+1}$, so the contribution from these terms is bounded by $p^{d|V(H')|+1}(2b^d)^{|V(H')|}=o(1)$.  

Likewise, when $d(v)>d$, by the second part of Lemma~\ref{lem:poisson} there is a $C$ so that the case of $\max_{U} d(U) \leq C$ contributes $O(t(H',p,b^d))$ to the right side of (\ref{eq:t-H-expectation}), and the case of $\max_{|U|=k} d(U)>C$ contributes $o(1).$\end{proof}

\subsection{Proof of Theorem~\ref{thm:lower}}

We now begin the proof of Theorem~\ref{thm:lower}. The main idea is to apply Lemma~\ref{lem:reduction} to obtain new upper bounds on $t(H,p,N)$ for various subgraphs $H$ of the triforce, and then combine these with Lemma~\ref{lemma:LB2} to bound $t(H,p,N)$ for the triforce itself. 

We describe the subgraphs of the triforce to which we will apply Lemma~\ref{lem:reduction}. Any copy of the triforce must arise from a copy of one of the graphs formed by deleting two edges from the triforce. There are $8$ such graphs up to isomorphism, which we denote by $H_i$ for $1 \le i \le 8$ (see Figure~\ref{fig:H}).

\begin{figure}[h]
    \centering

\begin{tikzpicture} [rotate = -90]
\GraphInit[vstyle=Classic]
\SetVertexNoLabel
\SetGraphUnit{0.75}
\tikzset{VertexStyle/.append style={fill=black, minimum size=2pt, inner sep=0pt}}
\Vertex{a}
\SOEA(a){b}
\NOEA(a){c}
\SOEA(b){d}
\begin{scope}
\tikzset{VertexStyle/.append style={label={[label distance = 0.1cm]below:{$H_1$}}}}
\SOEA(c){e}
\end{scope}
\NOEA(c){f}
\NO(f){o}
\NOWE(o){n}
\NOWE(n){l}
\NOEA(l){m}
\begin{scope}
\tikzset{VertexStyle/.append style={label={[label distance = 0.1cm]below:{$H_2$}}}}
\SOEA(m){p}
\end{scope}
\NOEA(m){q}

\Edges(l,m,n)
\Edges(p,q,m,p,n)
\Edges(p,o)

\Edges(b,c,a)  
\Edges(d,b,e,c,f,e)

\NO(q){d1}
\NOWE(d1){b1}
\NOWE(b1){a1}
\NOEA(a1){c1}
\begin{scope}
\tikzset{VertexStyle/.append style={label={[label distance = 0.1cm]below:{$H_3$}}}}
\SOEA(c1){e1}
\end{scope}
\NOEA(c1){f1}
\NO(f1){o1}
\NOWE(o1){n1}
\NOWE(n1){l1}
\NOEA(l1){m1}
\begin{scope}
\tikzset{VertexStyle/.append style={label={[label distance = 0.1cm]below:{$H_4$}}}}
\SOEA(m1){p1}
\end{scope}
\NOEA(m1){q1}

\Edges(l1,m1,n1)
\Edges(p1,q1,m1,p1) 
\Edges(n1,o1,p1)

\Edges(c1,a1)  
\Edges(e1,d1,b1,e1,c1,f1,e1)

\end{tikzpicture}

\centering
\begin{tikzpicture} [rotate = -90]
\GraphInit[vstyle=Classic]
\SetVertexNoLabel
\SetGraphUnit{0.75}
\tikzset{VertexStyle/.append style={fill=black, minimum size=2pt, inner sep=0pt}}
\Vertex{a}
\SOEA(a){b}
\NOEA(a){c}
\SOEA(b){d}
\begin{scope}
\tikzset{VertexStyle/.append style={label={[label distance = 0.1cm]below:{$H_5$}}}}
\SOEA(c){e}
\end{scope}
\NOEA(c){f}
\NO(f){o}
\NOWE(o){n}
\NOWE(n){l}
\NOEA(l){m}
\begin{scope}
\tikzset{VertexStyle/.append style={label={[label distance = 0.1cm]below:{$H_6$}}}}
\SOEA(m){p}
\end{scope}
\NOEA(m){q}

\Edges(l,m)
\Edges(n,l)
\Edges(p,q,m,p)
\Edges(n,o,p)

\Edges(b,c,a)  
\Edges(e,d,b,e) 
\Edges(c,f,e)

\NO(q){d1}
\NOWE(d1){b1}
\NOWE(b1){a1}
\NOEA(a1){c1}
\begin{scope}
\tikzset{VertexStyle/.append style={label={[label distance = 0.1cm]below:{$H_7$}}}}
\SOEA(c1){e1}
\end{scope}
\NOEA(c1){f1}
\NO(f1){o1}
\NOWE(o1){n1}
\NOWE(n1){l1}
\NOEA(l1){m1}
\begin{scope}
\tikzset{VertexStyle/.append style={label={[label distance = 0.1cm]below:{$H_8$}}}}
\SOEA(m1){p1}
\end{scope}
\NOEA(m1){q1}

\Edges(m1,n1)
\Edges(p1,q1,m1,p1,n1,o1,p1)

\Edges(a1,b1,c1)  
\Edges(d1,b1,e1,c1,f1,e1)

\end{tikzpicture}

\caption{The eight subgraphs (up to isomorphism) with $7$ edges of the triforce.}
\label{fig:H}
\end{figure}
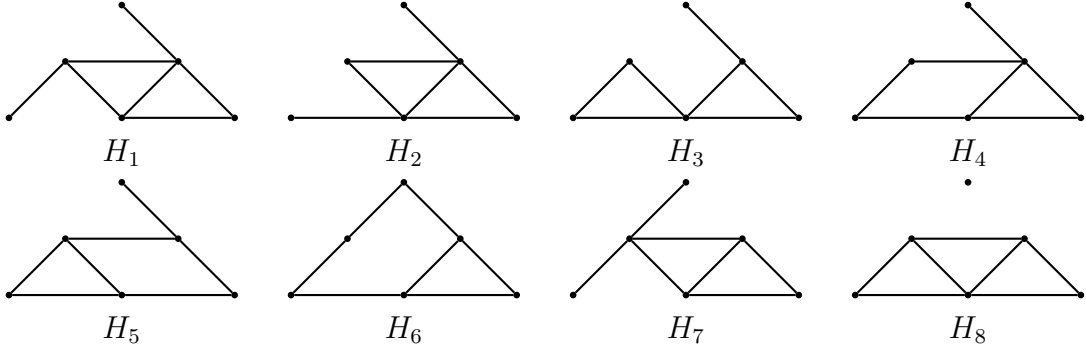

We will prove that the first $6$ of these graphs are hard to construct quickly, although it turns out that there is no need to analyze $H_1$, $H_2$, or $H_7$. The last subgraph $H_8$ is more difficult to handle, and we will bound copies of it using a different analysis.

\begin{prop} \label{prop:1-6}
For all $H_i$ such that $1 \le i \le 6$, $t(H_i,p,b^2)\lesssim b^2\ell^2$.
\end{prop}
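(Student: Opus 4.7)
The plan is to prove each of the six bounds by case analysis using a common template of four reductions: two applications of Lemma~\ref{lem:reduction}(1) to vertices of current degree exactly $2$, each contributing a factor of $\ell$, followed by two applications of inequality~(\ref{eq:t-rec-3}) of Lemma~\ref{lemma:LB2}, each contributing a factor of $pN = b$. The product $\ell \cdot \ell \cdot b \cdot b = b^2 \ell^2$ is exactly the desired bound.

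Concretely, for each $H_i$ I would choose a vertex $v_1$ of current degree exactly $2$ and peel it via Lemma~\ref{lem:reduction}(1), arriving at a graph on $5$ vertices with $5$ edges; then choose a vertex $v_2$ of current degree exactly $2$ in that graph and peel it as well, arriving at a graph $H_i''$ on $4$ vertices with $3$ edges. In each of the six cases, $H_i''$ turns out to be a path $P_4$; applying~(\ref{eq:t-rec-3}) to one of its two end-edges removes both of the path's leaves-and-their-neighbors and leaves a single edge $K_2$ between the remaining two vertices. A final application of~(\ref{eq:t-rec-3}) to this $K_2$ gives $t(K_2, p, b^2) \lesssim pN \cdot t(\emptyset, p, b^2) = b$, completing the calculation.

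The main obstacle is verifying, case-by-case, that for each $H_i$ a suitable pair $(v_1, v_2)$ exists — in particular, that after the two peels the resulting $4$-vertex, $3$-edge graph admits an end-edge whose removal via~(\ref{eq:t-rec-3}) leaves exactly a $K_2$ (rather than two isolated vertices, which would be much too expensive, contributing $\Theta(b^4)$ instead of $b$). This is a short finite check against the labelings in Figure~\ref{fig:H}. In any subcase where the initial graph $H_i$ lacks a degree-$2$ vertex, or where the natural peel order does not yield a $P_4$, I would interpose a cost-free step — either a peel of a vertex of current degree $>2$ via Lemma~\ref{lem:reduction}(2), or a free edge removal via~(\ref{eq:t-rec-1}) — before resuming the template. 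The high edge-density of each $H_i$ (seven edges on six vertices, with no isolated vertex in the cases under consideration) provides ample flexibility to complete each verification.
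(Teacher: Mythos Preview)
Your proposal is correct and follows essentially the same route as the paper: peel two degree-$\ge 2$ vertices via Lemma~\ref{lem:reduction} (picking up the $\ell^2$), land on the four-vertex path, and then bound $t(\text{path},p,b^2)\lesssim b^2$ via two uses of~(\ref{eq:t-rec-3}). The paper states this in one line (writing $P_3$ for the $3$-edge path where you write $P_4$), so your extra discussion of fallbacks via Lemma~\ref{lem:reduction}(2) or~(\ref{eq:t-rec-1}) is unnecessary---in each of the six cases one can indeed reach the path by removing two vertices of degree exactly $2$---but it does no harm.
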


\begin{proof}
For each graph $H_i$ with $1 \le i \le 6$, it is possible to remove two vertices of degree at least two to arrive at the path $P_3$ on four vertices. Thus, we may apply the first part of Lemma~\ref{lem:reduction} twice to show that
\[
t(H_i, p, b^2) \lesssim \ell^2 t(P_3, p, b^2),
\]
for all $1\le i \le 6$. Lastly, note that $t(P_3,p,b^2) \lesssim bt(K_2,p,b^2) \lesssim b^2$ by applying (\ref{eq:t-rec-3}) from Lemma~\ref{lemma:LB2}.

Hence, for $H_i$ with $1 \le i \le 6$, $t(H_i,p,b^2) \lesssim \ell^2 t(P_3,p,b^2) \lesssim \ell^2b^2$ as desired. \end{proof}

We must now deal with $H_8$, on which the reductions of Lemma~\ref{lem:reduction} and Lemma~\ref{lemma:LB2} are not sufficient to provide the bounds that we want.

We need one last definition. Given a graph $H$ with a distinguished vertex $u$, let $t^u(H,p,N)$ be the maximum expected number of copies of $H$ we can build in time $N$ so that $u$ maps to the same vertex in each copy. It is important to emphasize that the image of $u$ is not determined ahead of time, and we may pick it adaptively based on the queries made so far.

\begin{lem}
\label{diamond}
Let $D$ be the \emph{diamond} graph depicted below:

\begin{center}
\begin{tikzpicture} [rotate = 0]
\GraphInit[vstyle=Classic]
\SetVertexNoLabel
\SetGraphUnit{1}
\tikzset{VertexStyle/.append style={fill=black, minimum size=2pt, inner sep=0pt}}

\begin{scope}
\tikzset{VertexStyle/.append style={label=above:{$u$}}}
\Vertex{a}
\end{scope}
\SOEA(a){b}
\NOEA(b){c}
\SOEA(c){d}
\Edges(c,a,b,c,d,b)
\end{tikzpicture}
\end{center}
Then $t^u(D,p,b^2)\lesssim \ell^3$.
\end{lem}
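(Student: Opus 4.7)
The plan is to extend Lemma~\ref{lem:reduction} to the $t^u$ setting and then apply it iteratively to the diamond to strip off its degree-$2$ vertices, accumulating factors of $\ell$.

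First, I would verify that Lemma~\ref{lem:reduction} remains valid when $t$ is replaced by $t^u$, provided the vertex $v$ being removed is distinct from the distinguished vertex $u$. The proof of Lemma~\ref{lem:reduction} writes
\[
t^u(H,p,b^d) \;=\; \mathbb{E}\!\left[\sum_{|U|=k} d(U)\, H'^u(U)\right],
\]
where $H'^u(U)$ counts $u$-pinned copies of $H\setminus v$ with the $k$ neighbors of $v$ mapping to $U$, and then splits the expectation based on whether $\max_{|U|=k} d(U) > C\ell$. Since $u$ never appears in $U$ when $v\neq u$, the codegree bound from Lemma~\ref{lem:poisson} applies unchanged. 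Hence, if $v\neq u$ has $d(v) = 2$, then $t^u(H, p, b^2) \lesssim \ell\cdot t^u(H\setminus v, p, b^2)$, and if $d(v) > 2$, then $t^u(H, p, b^2) \lesssim t^u(H\setminus v, p, b^2)$.

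Applying this $t^u$ reduction to $D$ at the vertex $d$ (which has degree $2$ and is distinct from $u$) gives
\[
t^u(D, p, b^2) \;\lesssim\; \ell \cdot t^u(K_3, p, b^2),
\]
where $K_3$ denotes the triangle on $u$, $b$, $c$.

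The key remaining step, which I expect to be the main obstacle, is the bound $t^u(K_3, p, b^2) \lesssim \ell^2$. A naive second application of the reduction to the degree-$2$ vertex $b$ of $K_3$ yields only $t^u(K_3, p, b^2) \lesssim \ell\cdot t^u(K_2, p, b^2) \lesssim \ell\cdot b$, since a ``star'' strategy which spends all queries at $v_0$ can achieve $t^u(K_2, p, b^2) = \Theta(b)$. To close the gap, I would count triangles through the distinguished vertex $v_0 = \phi(u)$ directly by writing
\[
\#\{\text{triangles through }v_0\} \;=\; \tfrac{1}{2}\sum_{v_b\in N_G(v_0)} |N_G(v_0)\cap N_G(v_b)|,
\]
where by Lemma~\ref{lem:poisson} (part~1, $k=2$, $N=b^2$, $p^2 N = 1$) each codegree on the right is $O(\ell)$. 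It then remains to argue that the number of revealed neighbors of $v_0$ which actually participate in a triangle is at most $O(\ell)$---the crucial insight being that the strategy's query budget of $b^2$ forces a trade-off between building a large neighborhood $N_G(v_0)$ and revealing enough interior edges among those neighbors to close any triangle. I would formalize this via a budget-accounting argument of the same flavor as in the book-finding algorithm of Lemma~\ref{lem:book}, showing that the ``useful'' subset of $N_G(v_0)$ contributing to any triangle can be dominated by an $O(\ell)$ bound via the codegree estimate.

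Combining the reductions then gives
\[
t^u(D, p, b^2) \;\lesssim\; \ell\cdot t^u(K_3, p, b^2) \;\lesssim\; \ell\cdot \ell^2 \;=\; \ell^3,
\]
as claimed.
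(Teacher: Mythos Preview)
Your reduction $t^u(D,p,b^2)\lesssim \ell\cdot t^u(K_3,p,b^2)$ is fine, but the subsequent claim $t^u(K_3,p,b^2)\lesssim \ell^2$ is false, and with it the whole plan collapses. In fact $t^u(K_3,p,b^2)=\Omega(b)$. Consider the strategy that fixes $v_0$, spends $b^2/2$ queries on pairs $(v_0,w)$ to produce a neighbor set $N$ with $|N|\approx b/2$, and then spends the remaining budget querying all $\binom{|N|}{2}\approx b^2/8$ pairs inside $N$. The expected number of edges found inside $N$ is $\Theta(b)$, and each such edge closes a triangle through $v_0$. This also refutes your auxiliary claim that only $O(\ell)$ neighbours of $v_0$ can participate in a triangle: the $\Theta(b)$ internal edges can be spread over $\Theta(b)$ distinct vertices of $N$. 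The ``budget-accounting trade-off'' you allude to does not exist at the $K_3$ level; it only appears once the extra vertex $d$ of the diamond is kept in the picture, because completing those triangles to diamonds is what exhausts the budget.

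The paper therefore does \emph{not} peel off the vertex $d$. Instead it conditions on which edge of $D$ is revealed last, obtaining three one-edge-deleted subgraphs $D_1,D_2,D_3$, and argues that any single successful query can complete at most $O(\ell^2)$ copies of any $D_i$ (with the image of $u$ fixed) into a copy of $D$, using two nested applications of the codegree bound from Lemma~\ref{lem:poisson}. Summing these per-query contributions and applying a binomial tail/union bound over all candidate images of $u$ then yields the $O(\ell^3)$ bound on $\max_{u'}$ of the diamond count. The point is that the full structure of $D$ (two overlapping codegree constraints) is used simultaneously when bounding each query's contribution; once you strip $d$ off and pass to $K_3$, one of those constraints is gone and the bound degrades from $\ell^3$ to $\ell\cdot b$.
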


\begin{proof}
As usual, we fix a query strategy maximizing $t^u(D,p,b^2)$ and let $G\subset G(2b^2,p)$ be the final graph built. Consider the following $3$ subgraphs of $D$ which we call $D_1$, $D_2$, and $D_3$ respectively.

\begin{center}
\begin{tikzpicture} [rotate = 0]
\GraphInit[vstyle=Classic]
\SetVertexNoLabel
\SetGraphUnit{1}
\tikzset{VertexStyle/.append style={fill=black, minimum size=2pt, inner sep=0pt}}
\begin{scope}
\tikzset{VertexStyle/.append style={label=above:{$u$}}}
\Vertex{a}
\end{scope}
\begin{scope}
\tikzset{VertexStyle/.append style={label={[label distance=0.5cm]315:{$D_1$}}}}
\SOEA(a){b}
\end{scope}
\NOEA(b){c}
\SOEA(c){d}
\Edges(c,a,b,c)
\Edges(d,b)
\begin{scope}
\tikzset{VertexStyle/.append style={label=above:{$u$}}}
\NOEA(d){a1}
\end{scope}
\begin{scope}
\tikzset{VertexStyle/.append style={label={[label distance=0.5cm]315:{$D_2$}}}}
\SOEA(a1){b1}
\end{scope}
\NOEA(b1){c1}
\SOEA(c1){d1}
\Edges(c1,a1)
\Edges(b1,c1,d1,b1)
\begin{scope}
\tikzset{VertexStyle/.append style={label=above:{$u$}}}
\NOEA(d1){a2}
\end{scope}
\begin{scope}
\tikzset{VertexStyle/.append style={label={[label distance=0.5cm]315:{$D_3$}}}}
\SOEA(a2){b2}
\end{scope}
\NOEA(b2){c2}
\SOEA(c2){d2}
\Edges(c2,a2,b2)
\Edges(c2,d2,b2)
\end{tikzpicture}
\end{center}
Every copy of $D$ must arise from adding an edge to a graph isomorphic to one of the $D_i$. For each $u'\in V(G)$, let $X_i(u')$ be the random variable counting the number of copies of $D$ so that $u$ maps to $u'$, and the last edge built in $D$ is the edge missing from $D_i$. For each $1 \le i \le 3$, the number of copies of $D$ we can build so that $u$ maps to the same vertex in each copy, and so that $D$ arises from some copy of $D_i$, is bounded by $\max_{u'}X_i(u')$. Thus,  
\begin{equation}\label{eq:t-u}
t^u(D,p,b^2) \le \sum_{i=1}^3\mathbb{E}[\max_{u'} X_i(u')].
\end{equation}

Also, define the random variable $X_i(u', j)$ to be the number of copies of $D_i$ with $u$ mapping to $u'$ that turn into a copy of $D$ after query $j$. In particular this number is $0$ if the query $j$ finds a non-edge. We have that $X_i(u') = \sum_j X_i (u', j)$.

By the first part of Lemma~\ref{lem:poisson}, in the random graph $G(2b^2,p)$ any two vertices have $O(\ell)$ common neighbors with overwhelmingly high probability. We can assume this is the case here as the contribution to the expectation $t^u(D,p,b^2)$ from other cases is $o(1)$. In particular, this means that each new edge built can turn at most $O(\ell^2)$ copies of $D_1$, $D_2$ or $D_3$ into $D$. For example, if an edge $(u', v')$ is built in $G$, then the number of copies of $D_2$ that can be completed into $D$ is exactly the number of ways to choose a common neighbor $w'$ of $u'$ and $v'$, and then a common neighbor of $v'$ and $w'$. As we assumed that codegrees are all $O(\ell)$, there are only $O(\ell^2)$ total choices for this copy of $D_2$.

This means we may assume that $X_i(u', j)$ is stochastically dominated (up to a constant) by $\ell^2 \Bin(1,p)$. As the results of all queries are independent, it follows that $X_i(u')$ is stochastically dominated by a constant times $\ell^2 \Bin(b,p)$. Now it is a short computation that
\[
\mathbb{P}[\Bin(b,p) > 100\ell] < p^5.
\]
In particular, there exists a $C>0$ such that $\mathbb{P}[X_i(u') > C\ell^3] < p^5$ for all $1\le i \le 3$ and all $u'\in V(G)$. Also, the maximum possible number of diamonds with a given vertex $u'$ is $(2b)^3$, so
\[
t^u(D,p,b^2) \le 3C\ell^3 + \mathbb{P}[X_i(u') > C\ell^3\text{ for some }i, u']\cdot (2b)^3 = O(\ell^3),
\]
by the union bound over all $O(b^2)$ choices of $1\le i\le 3$ and $u'\in V(G)$, as desired.
\end{proof}

Finally, we deal with the graph $H_8.$ This graph behaves differently from the other ones, in that it is not the case that $t(H_8, p, b^2) = O(b^{2+o(1)})$ (in fact one can build $\Omega(b^3)$ copies of $H_8$ due to the isolated vertex). We will need to add one edge to $H_8$ and analyze the resulting graph instead.

\begin{prop}\label{prop:hstar}
Let $H^*$ be the following graph:

\begin{center}
\begin{tikzpicture} [rotate = -90]
\GraphInit[vstyle=Classic]
\SetVertexNoLabel
\SetGraphUnit{1}
\tikzset{VertexStyle/.append style={fill=black, minimum size=2pt, inner sep=0pt}}
\Vertex{a}
\SOEA(a){b}
\NOEA(a){c}
\SOEA(b){d}
\SOEA(c){e}
\NOEA(c){f}
\Edges(b,c,a)  
\Edges(e,d,b,e,c,f,e)
\end{tikzpicture}
\end{center}
Then $t(H^*,p,b) = O(b\ell ^3).$

\end{prop}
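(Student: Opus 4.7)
My plan is to decompose $H^*$ as the diamond graph $D$ on the vertices $\{b,c,d,e\}$ (with missing edge $cd$, so that $c$ is a degree-$2$ vertex of $D$ and may play the role of the distinguished vertex $u$ of Lemma~\ref{diamond}) together with two extensions: the pendant $a$ adjacent only to $c$, and the vertex $f$, which is a common neighbor of $c$ and $e$. Every copy of $H^*$ in the found graph $G\subseteq G(2b,p)$ thus uniquely determines an embedding of $D$ into $G$, a choice of image for $a$ among the neighbors of the image of $c$, and a choice of image for $f$ among the common neighbors of the images of $c$ and $e$.

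For $v\in V(G)$, let $f(v)$ denote the number of copies of $D$ in $G$ with the distinguished vertex $u$ mapping to $v$, and let $d(v)$ be the degree of $v$ in $G$. Because $p^{2}(2b) = O(1/b) = (1/N)^{\Omega(1)}$, Lemma~\ref{lem:poisson} part~2 (with $k=2$) implies that the maximum codegree over pairs of vertices in $G$ is bounded by an absolute constant with overwhelming probability. Up to a negligible bad-event contribution, the decomposition therefore gives
\[
\#\{\text{copies of }H^*\text{ in }G\} \;\lesssim\; \sum_{v\in V(G)} d(v)\, f(v) \;\le\; \Big(\max_v f(v)\Big)\Big(\sum_v d(v)\Big).
\]
Since each of the $b$ queries reveals at most one edge, $\sum_v d(v) = 2|E(G)|\le 2b$ holds deterministically, so it remains to prove $\mathbb{E}[\max_v f(v)]=O(\ell^3)$.

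To bound $\max_v f(v)$, I would adapt the tail-bound argument from the proof of Lemma~\ref{diamond}. For a fixed $v$, any newly revealed edge of $G$ can complete at most $O(1)$ diamonds through $v$, since each such completion requires picking a common neighbor of an existing pair and the codegree bound is $O(1)$ in the present regime. Hence $f(v)$ is stochastically dominated up to a constant by $\Bin(b,p)$, and a standard binomial tail estimate at threshold $C\ell^{3}$ yields $\mathbb{P}[f(v)>C\ell^3]<p^5$ for a suitable constant $C$. A union bound over the $O(b)$ vertices of $G$ gives $\max_v f(v)\le C\ell^3$ with overwhelming probability, and the bad-event contribution (using the trivial deterministic bound $f(v)\le (2b)^3$) is negligible. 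Combining everything yields $t(H^*,p,b) = O(b\ell^3)$.

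The main technical obstacle will be carefully executing the concentration argument for $\max_v f(v)$ in the smaller-budget regime $N=b$: the outline mirrors that of Lemma~\ref{diamond}, but the codegree and binomial tail estimates must be rederived here, and one must check that the bad-event contributions from both the codegree and the $\max_v f(v)$ bounds are absorbed into the claimed expectation.
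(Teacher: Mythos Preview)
Your decomposition of $H^*$ into the diamond $D$ (with distinguished vertex at $c$), the pendant $a$, and the common-neighbor extension $f$ is exactly the paper's, as is the resulting inequality bounding the number of copies of $H^*$ by a codegree factor times $\sum_v d(v)f(v)\le(\max_v f(v))(\sum_v d(v))$.

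The real issue is that the proposition as printed contains two typos: the query budget should be $b^2$, not $b$, and the conclusion should be $O(b\ell^4)$. This is clear both from the paper's own proof (which works throughout in $G(2b^2,p)$ and ends with $b\ell^4$) and from the proof of Theorem~\ref{thm:lower}, which explicitly invokes $t(H^*,p,b^2)=O(b\ell^4)$. In the regime $N=b$ that you treat, the result is actually trivial: the expected number of copies of $H^*$ in all of $G(2b,p)$ is $O(b^6p^8)=O(b^{-2})$, so $t(H^*,p,b)=o(1)$ without any work.

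For the intended $N=b^2$ your outline needs three repairs. First, $p^2N=1$, so only part~1 of Lemma~\ref{lem:poisson} applies and the codegree bound is $O(\ell)$, not $O(1)$; this supplies an extra factor of $\ell$ for the image of $f$. Second, $\sum_v d(v)\le 2N=2b^2$ is too crude; one must use instead that $|E(G)|$ is $O(b)$ with overwhelming probability. Third, and most importantly, your re-derivation of the diamond bound breaks: with $b^2$ queries the step ``$f(v)$ dominated by $\Bin(b,p)$'' becomes domination by (a constant times $\ell^2$ times) $\Bin(b^2,p)$, which has mean $b$, so no useful tail at level $\ell^3$ is available. The paper does not redo this calculation at all; it invokes Lemma~\ref{diamond} as a black box, via $\E[\max_v f(v)]\le t^u(D,p,b^2)\lesssim\ell^3$, and then combines the three factors $\ell\cdot b\cdot\ell^3$ to get $O(b\ell^4)$.
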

\begin{proof}
Let $u$ be the vertex adjacent to the leaf of $H^{*}.$  For any vertex $v$ of our final graph $G\subset G(2b^2,p)$, let $f(v)$ be the number of copies of the diamond $D$ so that $u$ maps to $v$.  As before, we may assume that all degrees are $O(b)$ as the contribution to $t(H^{*},p,b^2)$ is trivial otherwise.  Furthermore, we may assume that any two vertices have $O(\ell)$ common neighbors, as again the contribution is trivial from other cases by the first part of Lemma~\ref{lem:poisson}.  Given any $v$, there are $f(v)$ choices for a copy of $D$ including it, $d(v)$ choices for the leaf off of $v$, and $O(\ell)$ choices for the remaining vertex of $H^*$, as it is the common neighbor of $v$ and of its degree $4$ neighbor in $H^{*}.$ Thus, we obtain:
\begin{align*}
t(H^{*},p,b^2) & \lesssim \ell \mathbb{E}\left[\sum_{v}f(v)d(v)\right] \lesssim \ell \mathbb{E}\left[\left( \max_{v}f(v) \right) \left(\sum_{v}d(v)\right)\right] \\
& \lesssim b\ell \mathbb{E}\left[ \max_{v}f(v) \right] \lesssim b\ell t^u(D,p,b^2) \lesssim b\ell^4,
\end{align*}
where the last inequality follows from Lemma~\ref{diamond}. \end{proof}

Putting all of the bounds together completes the proof of Theorem~\ref{thm:lower}.

\begin{proof}[Proof of Theorem~\ref{thm:lower}]

We will apply (\ref{eq:t-rec-2}) twice to the triforce $H$. Applying it once, we find
\begin{equation}\label{eq:h-triforce}
t(H,p,b^2) \lesssim p \max\limits_{e\in E(H)} t(H\backslash e, p, b^2),
\end{equation}
and there are only two nonisomorphic subgraphs of $H$ of the form $H\setminus e$. One of them is $H^*$, for which we have $t(H^*,p,b^2) = O(b\ell^4)$ by Proposition~\ref{prop:hstar}. 

If $H'$ is the other subgraph of the form $H\setminus e$, where an inner edge is deleted from the triforce, then we apply (\ref{eq:t-rec-2}) again to find
\begin{equation}\label{eq:h'}
t(H', p, b^2) \lesssim p\max\limits_{3\le i \le 6} t(H_i, p, b^2),
\end{equation}
since all the graphs $H'\setminus e$ are isomorphic to one of $H_3, H_4, H_5, H_6$. We have by Proposition~\ref{prop:1-6} that $t(H_i, p, b^2) = O(b^2\ell^3)$.

It follows from (\ref{eq:h'}) that $t(H', p, b^2) = O(b\ell^3)$. Together with the fact that $t(H^*,p,b^2) = O(b\ell^4)$ and (\ref{eq:h-triforce}), this proves that $t(H,p,b^2) = O(\ell^4)$. The theorem follows by one application of Lemma~\ref{lemma:LB1} with $N=b^2$.\end{proof}

\section{Cliques in $G(n,1/2)$} \label{sec:clique}

In this section, we prove Theorem~\ref{thm:cliques} using an idea of Huy Pham~\cite{p}. The argument is a modification of the proof of Theorem 1 in \cite{cfgh} when the number of vertices in $G(n,p)$ is bounded beforehand.

Let $G=G(n,\frac{1}{2})$. For each vertex subset $U\in V(G)$, let $e_{t}(U)$
be the number of queries made between pairs of vertices in $U$ after query $t$. We will
study the weight function
\[
w(U,t)=\begin{cases}
2^{-\binom{|U|}{2}+e_{t}(U)} & \text{if all queries so far in \ensuremath{U} succeeded}\\
0 & \text{otherwise.}
\end{cases}
\]

In other words, $w(U,t)$ is exactly the probability that $G[U]$ is a clique conditional on the information revealed after query $t$. The standard method of conditional
expectation proceeds by studying the evolution of the function
\[
w_{k}(t)\coloneqq\sum_{|U|=k}w(U,t),
\]
which is a martingale, and has the property that a $k$-clique is
found after query $t$ only if $w_{k}(t)\ge1$. Our modification studies
instead a restricted version of this sum. Namely, define $m(U,t)$
to be the size of the maximum matching in the known edges of $U$ after query
$t$. Then,
\[
w_{k,m}(t)\coloneqq\sum_{|U|=k,m(U,t)\ge m}w(U,t).
\]

Restricting to only sets with large maximum matchings has the function
of radically reducing the number of terms in the sum $w_{k,m}(t)$.
We pay for it in that $w_{k,m}(t)$ is no longer a martingale and
its expectation is harder to study. Nevertheless, it remains true that if a $k$-clique is
found after query $t$ then $w_{k,m}(t)\ge1$ for every $m\le k/2$.
\begin{lem}
\label{lem:recursive}For any $0\le m\le k/2$ and any fixed querying
strategy that uses $t\le\binom{n}{2}$ queries,
\[
\E[w_{k,m}(t)]\le t2^{-(2k-2m-1)}\cdot\E[w_{k-2,m-1}(t)].
\]
\end{lem}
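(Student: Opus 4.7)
The plan is to decompose the left-hand side by tracking, for each $k$-set $U$, the first time $\tau_U$ at which the revealed subgraph on $U$ contains a matching of size $m$. Since $\mathbf{1}[m(U,t)\ge m]=\sum_{s=1}^{t}\mathbf{1}[\tau_U=s]$ and $(w(U,r))_r$ is a martingale with respect to the query filtration $(\mathcal{F}_r)$ (because $w(U,r)=\mathbb{P}[G[U]\text{ is a clique}\mid\mathcal{F}_r]$), the optional-sampling identity $\E[w(U,t)\mathbf{1}[\tau_U=s]]=\E[w(U,s)\mathbf{1}[\tau_U=s]]$ gives
\[
\E[w_{k,m}(t)]=\sum_{s=1}^t \sum_{|U|=k}\E\bigl[w(U,s)\,\mathbf{1}[\tau_U=s]\bigr].
\]
When $\tau_U=s$, the query at time $s$ --- call it $\{u_s,v_s\}$ --- must lie in $U$, must return ``edge'', and (since a single query raises $m(U,\cdot)$ by at most one) must satisfy $m(U,s-1)=m-1$. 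Using the identity $w(U,s)=2\cdot\mathbf{1}[\text{query $s$ is an edge}]\cdot w(U,s-1)$ valid whenever the $s$th query lies in $U$, and the fact that this query is an independent fair coin given $\mathcal{F}_{s-1}$, conditioning yields
\[
\E\bigl[w(U,s)\,\mathbf{1}[\tau_U=s]\bigr]\le \E\bigl[w(U,s-1)\,\mathbf{1}[\{u_s,v_s\}\subset U,\ m(U,s-1)=m-1]\bigr].
\]

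The main step is to compare $w(U,s-1)$ with $w(U',s-1)$ for $U'\coloneqq U\setminus\{u_s,v_s\}$: a direct computation gives $w(U,s-1)/w(U',s-1)=2^{-(2k-3)+q}$, where $q$ is the number of queries in $U$ made by time $s-1$ that involve $u_s$ or $v_s$, so the desired factor $2^{-(2k-2m-1)}$ reduces to proving $q\le 2(m-1)$. Any matching $M$ of size $m$ in $U$ at time $s$ must contain the new edge $\{u_s,v_s\}$ (else $M$ would already be present at time $s-1$, contradicting $m(U,s-1)=m-1$), so $M''\coloneqq M\setminus\{\{u_s,v_s\}\}$ is a matching of size $m-1$ inside $U'$ that is present at time $s-1$. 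Fixing such an $M''$, I then argue: (a) no queried pair $\{u_s,w\}$ or $\{v_s,w\}$ with $w\in U'\setminus V(M'')$ can exist, since together with $M''$ it would produce a matching of size $m$ in $U$ at time $s-1$; and (b) for each edge $e_i=\{a_i,b_i\}\in M''$, at most two of the four pairs $\{u_s,a_i\},\{u_s,b_i\},\{v_s,a_i\},\{v_s,b_i\}$ can have been queried, since any three of them admit a swap on $e_i$ producing a matching of size $m$ in $U$ at time $s-1$ (for instance, $\{u_s,a_i\},\{u_s,b_i\},\{v_s,a_i\}$ all being queried edges yields the matching $(M''\setminus\{e_i\})\cup\{\{u_s,b_i\},\{v_s,a_i\}\}$). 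Summing (b) over the $m-1$ edges of $M''$ yields $q\le 2(m-1)$, as required.

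Plugging this bound back and dropping the disjointness constraint $U'\cap\{u_s,v_s\}=\emptyset$ bounds the $s$th summand by $2^{-(2k-2m-1)}\E[w_{k-2,m-1}(s-1)]$. To replace $s-1$ by $t$, I would check that $\E[w_{k-2,m-1}(r)]$ is non-decreasing in $r$: splitting $\mathbf{1}[m(U',t)\ge m-1]=\mathbf{1}[m(U',r)\ge m-1]+\mathbf{1}[m(U',r)<m-1\le m(U',t)]$ and applying the martingale property of $w(U',\cdot)$ to the first term gives $\E[w_{k-2,m-1}(r)]\le\E[w_{k-2,m-1}(t)]$. Summing over $s\in\{1,\ldots,t\}$ contributes the factor of $t$ and completes the proof. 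The main obstacle is the refined counting $q\le 2(m-1)$: the naive bound from step (a) alone only gives $q\le 4(m-1)$, which would lose a factor of $2^{2(m-1)}$ in the exponent, so the three-queries-admit-a-swap observation in step (b) is essential to recover the correct exponent $2k-2m-1$.
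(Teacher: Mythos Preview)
Your approach is the same as the paper's: decompose by the first time $\tau_U$ that a $k$-set $U$ acquires an $m$-matching, pass to time $s-1$ via the martingale property and the independence of the $s$th query, then compare $w(U,\cdot)$ to $w(U',\cdot)$ using a bound on the number of revealed pairs touching $\{u_s,v_s\}$. Your detailed proof of $q\le 2(m-1)$ (the swap argument in (a) and (b)) is precisely what the paper elides as ``it is easy to check,'' and your monotonicity argument for $\E[w_{k-2,m-1}(r)]$ is correct.

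There is one slip. When you pass from $\E[w(U,s)\,\mathbf{1}[\tau_U=s]]$ to $\E[w(U,s-1)\,\mathbf{1}[\{u_s,v_s\}\subset U,\ m(U,s-1)=m-1]]$ you enlarge the event, but your proof of $q\le 2(m-1)$ then invokes a size-$m$ matching $M$ ``at time $s$,'' which only exists on the unrelaxed event $\tau_U=s$. On the enlarged event the bound can genuinely fail: take $k=6$, let the only revealed edges in $U$ by time $s-1$ be $\{u_s,w_i\}$ and $\{v_s,w_i\}$ for $i=1,2,3$, with the sixth vertex isolated; then $m(U,s-1)=2=m-1$ but $q=6>4=2(m-1)$, and adding $\{u_s,v_s\}$ does not raise the matching number, so this outcome lies outside $\{\tau_U=s\}$. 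The fix is simply not to relax: keep the $\mathcal{F}_{s-1}$-measurable event
\[
E=\bigl\{\{u_s,v_s\}\subset U,\ m(U,s-1)=m-1,\ \text{adding }\{u_s,v_s\}\text{ yields an }m\text{-matching}\bigr\},
\]
on which the conditioning step gives equality, your swap argument goes through verbatim, and the existence of $M''$ also certifies $m(U',s-1)\ge m-1$ as needed for the final summation.
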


\begin{proof}
For every set $U\in\binom{[n]}{k}$, we say that $U$ is $m$-critical
at query $s$ if $s$ is the smallest number for which $m(U,s)\ge m$.
In particular, $U$ does not contribute to $w_{k,m}(t)$ until $t=s$,
after which it contributes $w(U,t)$, which is a martingale. This
means that if 
\[
w_{k,m}^{*}(s)\coloneqq\sideset{}{'}\sum_{|U|=k}w(U,s),
\]
where the sum is restricted to only sets $U$ which are $m$-critical at query $s$, then
\[
\E[w_{k,m}(t)-w_{k,m}(t-1)]=\E[w_{k,m}^{*}(t)],
\]
and so
\begin{equation}
\E[w_{k,m}(t)]=\sum_{s\le t}\E[w_{k,m}^{*}(s)].\label{eq:sum}
\end{equation}

Next, we will show
\begin{equation}
w_{k,m}^{*}(s)\le2^{-(2k-2m-2)}w_{k-2,m-1}(s).\label{eq:recursive}
\end{equation}
To see this, note that every $U$ that appears on the left side
must contain the edge $(u,v)$ built after query $s$, since $m(U,s)>m(U,s-1)$.
Furthermore, $U'=U\backslash\{u,v\}$ is a set with $k-2$ vertices
and an $m-1$ matching. Finally, every edge in $U$ but not $U'$
is incident to $(u,v)$. It is easy to check that if $(u,v)$ is an
edge that lies in every $m$-matching of $U$, then at most $2m-2$
other edges are incident to $(u,v)$. Thus, there are at least $2(k-2)-(2m-2)=2k-2m-2$
unqueried pairs in $U$ but not in $U'$, and
\[
w(U,t)\le2^{-(2k-2m-2)}w(U',t).
\]
Summing over all $m$-critical sets $U$, we get the desired inequality
(\ref{eq:recursive}). Taking expectations of both sides,
\[
\E[w_{k,m}^{*}(s)]\le2^{-(2k-2m-1)}\E[w_{k-2,m-1}(s)].
\]
Note that we gained another factor of $\frac{1}{2}$ here because
there is a $\frac{1}{2}$ chance that the query $(u,v)$ fails and
$w_{k,m}^{*}(s)=0$. Plugging this into (\ref{eq:sum}), we get
\[
\E[w_{k,m}(t)]\le2^{-(2k-2m-1)}\sum_{s\le t}\E[w_{k-2,m-1}(s)].
\]
The expectations on the right side are nondecreasing as a function
of $s$, so we can bound this by
\[
\E[w_{k,m}(t)]\le2^{-(2k-2m-1)}\sum_{s\le t}\E[w_{k-2,m-1}(s)]\le t2^{-(2k-2m-1)}\cdot\E[w_{k-2,m-1}(t)]
\]
as desired.
\end{proof}
Now we may iterate Lemma \ref{lem:recursive} until $m=0$ to prove
the following general bound.
\begin{lem}
\label{lem:cliques}For any $0\le m\le k/2$ and any fixed querying strategy
that uses $t\le\binom{n}{2}$ queries,
\[
\E[w_{k,m}(t)]\le t^{m}n^{k-2m}2^{-\binom{k}{2}+m(m-1)}.
\]
\end{lem}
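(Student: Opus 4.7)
The plan is to prove Lemma~\ref{lem:cliques} by induction on $m$, with Lemma~\ref{lem:recursive} providing the inductive step. Since iterating Lemma~\ref{lem:recursive} once replaces the pair $(k,m)$ by $(k-2,m-1)$, after $m$ applications we land at the pair $(k-2m,0)$, which will be the base case.

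For the base case $m=0$, note that $w_{k,0}(t) = \sum_{|U|=k} w(U,t)$ because every $k$-set trivially has $m(U,t)\ge 0$. For each fixed $U$, the quantity $w(U,t)$ is exactly the conditional probability (given the queries revealed by step $t$) that $G[U]$ is a clique, so by the tower property $\E[w(U,t)] = 2^{-\binom{k}{2}}$. Summing over the at most $n^k$ choices of $U$ yields $\E[w_{k,0}(t)] \le n^k 2^{-\binom{k}{2}}$, which matches the claimed bound at $m=0$.

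For the inductive step, assume the bound holds at $(k-2,m-1)$. Applying Lemma~\ref{lem:recursive} and then the inductive hypothesis gives
\[
\E[w_{k,m}(t)] \le t\cdot 2^{-(2k-2m-1)}\cdot \E[w_{k-2,m-1}(t)] \le t^m n^{k-2m}\cdot 2^{-(2k-2m-1) - \binom{k-2}{2} + (m-1)(m-2)}.
\]
The only thing to verify is that the exponent of $2$ simplifies to $-\binom{k}{2}+m(m-1)$. Using the identity $\binom{k}{2}-\binom{k-2}{2} = 2k-3$, the exponent becomes $-\binom{k}{2} + (2k-3) - (2k-2m-1) + (m-1)(m-2) = -\binom{k}{2} + 2(m-1) + (m-1)(m-2) = -\binom{k}{2} + m(m-1)$, as desired.

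There is no real obstacle in this proof; the work done in Lemma~\ref{lem:recursive} is exactly what makes the induction go through, and the only care needed is in the arithmetic of the exponent at each step. The factor $t^m$ arises from one new $t$ per application of Lemma~\ref{lem:recursive}, the factor $n^{k-2m}$ comes solely from the base case, and the factor $m(m-1)$ in the exponent comes from the telescoping sum $\sum_{i=0}^{m-1}(2k-2m-2i-1)$ that tracks the $2$-powers lost along the chain.
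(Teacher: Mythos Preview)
Your proof is correct and follows exactly the same approach as the paper: induction on $m$ with base case $m=0$ handled via the martingale property of $w_k(t)$ (so $\E[w_{k,0}(t)]=w_k(0)\le n^k2^{-\binom{k}{2}}$), and the inductive step supplied by Lemma~\ref{lem:recursive}. The only difference is that you carry out the exponent arithmetic explicitly, whereas the paper leaves this verification to the reader.
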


\begin{proof}
We induct on $m$. The base case $m=0$ is just the unrestricted weight
function
\[
\E[w_{k,0}(t)]=\E[w_{k}(t)]=w_{k}(0)=\binom{n}{k}2^{-\binom{k}{2}}\le n^{k}2^{-\binom{k}{2}},
\]
for all $k$, as desired. Assuming the statement is true for some
$m\ge0$ and all $k\ge2m$, Lemma \ref{lem:recursive} provides the
inductive step for $m+1$ and all $k\ge2m+2$.
\end{proof}

It remains to prove Theorem~\ref{thm:cliques} using Lemma~\ref{lem:cliques}.
\begin{proof}[Proof of Theorem~\ref{thm:cliques}.]
Recall that $\E[w_{k,m}(t)]$
is an upper bound on the probability one can find a $k$-clique in
$t$ queries. By Lemma \ref{lem:cliques}, we see that whenever $n,k,t$
are such that there exists $m\le\frac{k}{2}$ for which
\[
\E[w_{k,m}(t)]\le t^{m}n^{k-2m}2^{-\binom{k}{2}+m(m-1)}<\frac{1}{2},
\]
then it is impossible to find a $k$-clique in $t$ queries in $G(n,\frac{1}{2})$
with probability at least $\frac{1}{2}$. It is cleaner to compute
the base-$2$ logarithm of this quantity. Taking $t=n^{\delta}$ and
$k=\alpha\lg n$ and writing $\ell=\lg n$ as a shorthand,
we get
\begin{eqnarray*}
\lg \Big(t^{m}n^{k-2m}2^{-\binom{k}{2}+m(m-1)}\Big) & = & (\alpha\ell-m(2-\delta))\ell-\binom{\alpha\ell}{2}+m(m-1)\\
 & \leq & \Big(\alpha-\frac{\alpha^{2}}{2}\Big)\ell^{2}-(2-\delta)m\ell+m^{2}+O(\ell).
\end{eqnarray*}
If $m=c\ell$ where $c\le\alpha/2$, then
\[
\Big(\alpha-\frac{\alpha^{2}}{2}\Big)\ell^{2}-(2-\delta)m\ell+m^{2}=\Big(\alpha-\frac{\alpha^{2}}{2}-(2-\delta)c+c^{2}\Big)\ell^{2}
\]
is minimized at $c=\frac{2-\delta}{2}$. Assuming that $\alpha\ge2-\delta$,
we find that for this choice of $c$,
\[
\lg (\E[w_{k,m}(t)])\le\Big(\alpha-\frac{\alpha^{2}}{2}-\frac{(2-\delta)^{2}}{4}\Big)\ell^{2}+O(\ell).
\]
In particular, this shows that whenever $\alpha\ge2-\delta$ satisfies
\[
\alpha-\frac{\alpha^{2}}{2}-\frac{(2-\delta)^{2}}{4}<0,
\]
then for sufficiently large $n$, it is impossible to find a clique with $\alpha\lg n$ vertices in $n^{\delta}$ queries. Thus, $\alpha_\star(\delta)$ is bounded
above by the (larger) solution to the above quadratic, which is
\[
\alpha_{+}=1+\sqrt{1-\frac{(2-\delta)^{2}}{2}}>2-\delta,
\]
as desired.
\end{proof}
\section{Concluding Remarks}\label{sec:closing}

The immediate question that arises from our work is to classify the graphs $H$ for which $f(H,p)=b^{d-o(1)}$. The natural first step is the case $d=2$. To this end, we first establish a large family of $2$-degenerate graphs $H$ for which $f(H,p) = O(b^{2-\varepsilon})$ for some $\varepsilon>0$.

\begin{Def}
We call a graph $H$ {\it $(1,1)$-degenerate} if $H$ can be vertex-partitioned into induced subgraphs $T_1, ..., T_n$ which are trees, such that for all $k\in \{1,...,n\}$ and all $v\in T_k$, 
\[
|N(v)\cap \bigcup_{i=1}^{k-1} T_i|\leq 1. 
\]
\end{Def}

It is easy to see that if $H$ is $(1,1)$-degenerate, then $H$ is $2$-degenerate. One can show by induction on the number of trees $n$ that if $H$ is $(1,1)$-degenerate, then $f(H,p)=O(b^{2-\varepsilon})$ for some $\varepsilon>0$. We prove this, and conjecture that the converse is true.

\begin{thm}
If $H$ is $(1,1)$-degenerate, then $f(H,p)=O(b^{2-\varepsilon})$ for some $\varepsilon=\varepsilon(H)>0$.
\end{thm}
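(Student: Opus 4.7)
The plan is to proceed by induction on $n$, the number of trees in the $(1,1)$-degenerate partition of $H$. The base case $n=1$ is immediate: $H$ is itself a tree, hence $1$-degenerate, so $f(H,p) = O(b) = O(b^{2-1})$.

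For the inductive step, I would set $H' = H[V(T_1) \cup \cdots \cup V(T_{n-1})]$. Then $H'$ is $(1,1)$-degenerate with $n-1$ trees, so by the inductive hypothesis $f(H',p) = O(b^{2-\varepsilon'})$ for some $\varepsilon' = \varepsilon'(H) > 0$. The strategy is first to find a copy $\phi$ of $H'$ in $O(b^{2-\varepsilon'})$ queries and then, conditional on $\phi$, to attach the last tree $T_n = \{u_1, \ldots, u_s\}$ in an additional $O(b^{2-1/(2s)})$ queries; iterating $O_H(1)$ times then boosts the success probability above $\tfrac{1}{2}$. By the $(1,1)$-property, each $u_i$ has at most one neighbor $a_i \in V(H')$, which I will call its anchor.

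To attach $T_n$, fix $\alpha = 1 - 1/(2s)$ and run a three-phase algorithm. In Phase~(i), for each $u_i$ build a candidate set $C_i \subset V(G) \setminus V(H')$ of size $\lceil b^\alpha \rceil$ with the $C_i$ pairwise disjoint: if $u_i$ is anchored, sample $O(b^{1+\alpha})$ fresh vertices and keep those adjacent to $\phi(a_i)$ (Chernoff yields $|C_i| \ge b^\alpha$ with high probability); if not, reserve $\lceil b^\alpha \rceil$ fresh vertices with no queries. In Phase~(ii), for each edge $(u_i, u_j) \in E(T_n)$ query every pair in $C_i \times C_j$, costing $O(b^{2\alpha})$. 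In Phase~(iii), search for an injection $\psi\colon V(T_n) \to V(G)$ with $\psi(u_i) \in C_i$ that maps every edge of $T_n$ into $E(G)$. The total cost is $O(b^{1+\alpha} + b^{2\alpha}) = O(b^{2-1/(2s)})$, and the expected number $X$ of valid embeddings in Phase~(iii) satisfies $\mathbb{E}[X] = \prod_i |C_i| \cdot p^{s-1} = b^{s\alpha - (s-1)} = b^{1/2} \to \infty$.

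The main technical step, and the main potential obstacle, is the second-moment estimate for Phase~(iii). Because Phase~(i) queries touch only pairs of the form $(\phi(a_i), \text{candidate})$ while Phase~(ii) queries touch only the disjoint products $C_i \times C_j$, all random variables relevant to Phase~(iii) are independent Bernoulli$(p)$. For each subset $V' \subseteq V(T_n)$ with overlap forest $F = T_n[V']$, the contribution to $\mathbb{E}[X^2]/\mathbb{E}[X]^2$ is of order $b^{|E(F)| - \alpha |V'|}$; using $|E(F)| \le |V'| - 1$ (since $F$ is a forest) and $\alpha = 1 - 1/(2s)$ gives $|E(F)| - \alpha|V'| \le |V'|/(2s) - 1 \le -1/2$ for every nonempty $V' \subseteq V(T_n)$, so $\mathbb{E}[X^2] = (1 + o(1)) \mathbb{E}[X]^2$ and the second-moment method yields $\mathbb{P}[X \ge 1] = 1 - o(1)$. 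Concluding the induction with $\varepsilon(H) = \min(\varepsilon', 1/(2s)) > 0$ finishes the proof, and since every tree has at most $|V(H)|$ vertices we in fact obtain $\varepsilon(H) \ge 1/(2|V(H)|)$.
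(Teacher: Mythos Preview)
Your proof is correct and follows essentially the same approach as the paper: induct on the number of trees, build $H'$ first, then for each vertex of $T_n$ create a candidate cloud of size $b^{1-\Theta(1/s)}$ (anchored to $\phi(a_i)$ when needed), query the relevant cross-pairs, and find a rainbow copy of $T_n$ across the clouds. The only differences are cosmetic: the paper takes $\varepsilon = 1/s$ and appeals to the standard tree-containment threshold, whereas you take $\varepsilon = 1/(2s)$ so that $\mathbb{E}[X]\to\infty$ and supply an explicit second-moment computation---arguably a cleaner justification of the same step.
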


\begin{proof}
We induct on the number of trees $n$ in the $(1,1)$-degenerate partition $T_1,\ldots, T_n$ of $H$. The explicit value of $\varepsilon(H)$ we pick is $\varepsilon(H)\coloneqq\max\{|V(T_1)|,\ldots, |V(T_n)|\}^{-1}$.

If $n=1$, then $H$ is a tree so that $f(H,p)=O(b)$, as desired. Now, say $n \ge 2$ and let $V(H)=V(H') \cup V(T_n)$.  For $\varepsilon' = \varepsilon(H')$, we can build a copy of $H'$ with probability at least $3/4$ in some time $O(b^{2-\varepsilon'})$ by the inductive hypothesis.  Let $\varepsilon = \min\{\varepsilon', |V(T_n)|^{-1}\}$. Now, for each $v \in V(T_n)$ there exists at most one $u \in V(H')$ so that $v$ and $u$ are adjacent. If there does not exists such a $u$, we let $C_v$ be a set of $b^{1-\varepsilon}$ previously unexplored vertices, and if there is, let $C_v$ be a set of $b^{1-\varepsilon}$ neighbors of $u$.  We can find such a set $C_v$ in at most $O(b^{2-\varepsilon})$ queries (with constant probability). 

Now, query all edges between $C_v$ and $C_{v'}$ for every pair of vertices $v, v' \in V(T_n)$.  This takes time $O(b^{2-2\varepsilon})$.  We claim that there will exist a copy of $T_n$ with $v \in C_v$ for all $v \in V(T_n)$, because the expected number of copies of $T_n$ is on the order of $b^{(1-\varepsilon)|V(T_n)|}b^{|V(T_n)|-1}=\Theta(1)$, and it is a standard result that the threshold for containment of a tree is the same as the expectation threshold. This gives that we have at least one such copy of $T_n$ with probability at least $3/4$. This copy of $T_n$ extends the original copy of $H'$ to our desired copy of $H$, all with probability at least $1/2$ in time $O(b^{2-\varepsilon})$.\end{proof}

\begin{conj}
If $H$ is a $2$-degenerate graph that is not $(1,1)$-degenerate, then $f(H,p)=b^{2-o(1)}$.
\end{conj}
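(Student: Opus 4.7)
The plan is, following Section~\ref{sec:lower}, to show $t(H,p,b^2) = b^{o(1)}$ for every 2-degenerate $H$ that is not $(1,1)$-degenerate; by Lemma~\ref{lemma:LB1} with $N = b^2$, this would yield $f(H,p) \ge b^{2-o(1)}$.

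First I would seek a clean combinatorial characterization of $(1,1)$-degeneracy. The condition is equivalent to the existence of a coloring of $V(H)$ by positive integers such that each color class induces a tree and each vertex has at most one edge to strictly lower-colored vertices. In this formulation, many small 2-degenerate graphs (e.g., $C_n$, $K_{2,n}$, the diamond $K_4-e$, the bowtie, the books $B_{d,t}$) are easily verified to be $(1,1)$-degenerate, while the triforce is a prototypical obstruction. My working guess is that the minimal non-$(1,1)$-degenerate 2-degenerate graphs form a short list, ideally one that always contains a ``triforce-like'' gadget (three degree-two vertices, each joining two distinct vertices of a single triangle).

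Granting this structural step, each minimal obstruction could then be handled by a direct adaptation of the triforce lower-bound argument. Concretely, one would iterate (\ref{eq:t-rec-2}) from Lemma~\ref{lemma:LB2} twice to remove two edges, paying a factor of $p^2$, and then bound $t(H',p,b^2)$ for the residual subgraph $H'$ by peeling degree-2 vertices via Lemma~\ref{lem:reduction}, where each peel contributes only a polylogarithmic factor $\ell$. For residual subgraphs that become disconnected after edge deletions, the pinned-vertex technique of Lemma~\ref{diamond} would handle the isolated-vertex contribution, exactly as was needed to deal with $H_8$ in the triforce proof. To transfer the lower bound from a minimal obstruction $H_0 \subseteq H$ to $H$ itself, the monotonicity $f(H,p) \ge f(H_0, p)$ suffices.

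The main obstacle is the structural step. Even deciding whether the triforce is the unique minimal obstruction on at most six vertices, let alone classifying the larger ones, seems delicate: there could in principle be bipartite minimal obstructions, which would contain no triangle and so cannot reduce to the triforce. For such examples a new direct lower-bound argument would be needed, plausibly involving a ``bipartite triforce'' or a $K_{2,2}$-based gadget. A more conceptual attack would be to avoid the structural classification entirely by defining an integer potential $\Phi(H)$ measuring how far $H$ is from being $(1,1)$-degenerate (e.g., the minimum over attempted orderings and partitions of the number of vertices violating the one-back-edge rule), and to induct on $\Phi(H)$. Closing this induction --- in particular showing that one can always find an edge deletion or vertex peel that strictly decreases $\Phi$ without returning to a $(1,1)$-degenerate subgraph --- would likely require new multi-vertex codegree lemmas beyond Lemma~\ref{lem:poisson}, and is, to my mind, the deepest obstacle to a proof.
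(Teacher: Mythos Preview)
The statement you are attempting is stated in the paper as a \emph{conjecture}, not a theorem; the paper offers no proof. The only evidence the paper gives is the single case $H=$ triforce (Theorem~\ref{thm:lower}), together with the complementary upper bound for $(1,1)$-degenerate graphs. So there is no ``paper's own proof'' to compare your proposal against.

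What you have written is not a proof but a research outline, and you are aware of this. As an outline it is sensible: reducing to minimal obstructions via $f(H,p)\ge f(H_0,p)$, then attacking each $H_0$ with the machinery of Section~\ref{sec:lower} (iterating (\ref{eq:t-rec-2}), peeling degree-$2$ vertices via Lemma~\ref{lem:reduction}, and handling disconnected residuals with a pinned-vertex argument \`a la Lemma~\ref{diamond}) is exactly the template the triforce proof follows. Your small examples are correct --- in particular the diamond $K_4-e$ really is $(1,1)$-degenerate via the partition $T_1=\{v\}$, $T_2=\{$the path on the remaining three vertices$\}$, where $v$ is one of the two degree-$3$ vertices.

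The gaps you flag are the real ones, and they are why this is a conjecture. There is no reason to expect the minimal non-$(1,1)$-degenerate $2$-degenerate graphs to form a finite list, and even if one had such a list, the triforce argument in Section~\ref{sec:lower} is quite bespoke: the case analysis over the eight $7$-edge subgraphs and the special treatment of $H_8$ via $H^*$ and $t^u(D,p,b^2)$ do not obviously generalize. Your alternative idea of a potential $\Phi(H)$ and an induction that avoids the classification is attractive, but as you say, closing that induction would require showing one can always delete an edge or peel a vertex while staying outside the $(1,1)$-degenerate class and controlling $t(\cdot,p,b^2)$ --- and nothing in the paper's toolkit guarantees this. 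In short: your plan is a reasonable attack on an open problem, but it is not a proof, and the paper does not claim one either.
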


In the case $d=2$, we were able to construct a particular $2$-degenerate graph $H$ for which $f(H,p) \ge b^{d-o(1)}$. The existence of such graphs when $d\ge 3$ remains open.

\begin{conj}
For all integers $d\geq 2$, there exists a $d$-degenerate graph $H$ for which $f(H,p) = b^{d-o(1)}$.
\end{conj}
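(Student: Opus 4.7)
Since the statement is a conjecture, I sketch the most natural attempt. The candidate family is the \emph{generalized triforce} $H_d$: start with a clique $K_{d+1}$ on vertices $v_0, \ldots, v_d$, and for every $d$-subset $S \subset \{v_0, \ldots, v_d\}$ adjoin a new vertex $w_S$ joined to all of $S$. Each $w_S$ has degree $d$, so $H_d$ is $d$-degenerate, and $H_2$ is precisely the triforce. By Lemma~\ref{lemma:LB1} it would suffice to establish the combinatorial bound $t(H_d, p, b^d) = b^{o(1)}$, in analogy with the bound $t(H, p, b^2) = O(\ell^4)$ from Theorem~\ref{thm:lower}.

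The proposed strategy iterates the same two moves used for the triforce. First, apply (\ref{eq:t-rec-2}) a bounded number of times to strip off edges of $H_d$, paying a factor of $p$ per edge removed and reducing the task to bounding $t(H', p, b^d)$ for each subgraph $H'$ obtained by deleting $O_d(1)$ edges. Second, for each resulting subgraph $H'$, apply Lemma~\ref{lem:reduction}(1) to peel off as many vertices of remaining degree exactly $d$ as possible, paying a factor of $\ell$ per peel; since $H_d$ has $d+1$ attached vertices, most of which retain degree $d$ after a few edge deletions, this can be done until only $O_d(1)$ vertices remain. If peeling terminates at a subgraph on $O_d(1)$ vertices with all vertex degrees strictly below $d$, we close the bound using (\ref{eq:t-rec-3}), and the hope is that the resulting factors of $p^{O_d(1)}$, $\ell^{O_d(1)}$, and $b^{O_d(1)}$ combine to $b^{o(1)}$ as they do in the $d=2$ case.

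The main obstacle mirrors the role of $H_8$ in Section~\ref{sec:lower}. Certain edge-deletion patterns push some attached $w_S$ below degree $d$ while simultaneously leaving the core $K_{d+1}$ essentially intact, so that Lemma~\ref{lem:reduction}(1) no longer applies and peeling would cost a factor of $b$ rather than $\ell$. In such cases the analogue of Lemma~\ref{diamond} must be proved: designate a vertex $u$ of the problematic marked subgraph and bound $t^u(\cdot, p, b^d)$ by analyzing, for each query, how many copies of the target can be completed by that query. The input is the first part of Lemma~\ref{lem:poisson}, which controls the codegree of any constant-size vertex set in $G(2b^d, p)$ by $O(\ell)$ with overwhelming probability; combined with a stochastic-domination argument as in the treatment of $X_i(u', j)$ in Lemma~\ref{diamond}, this should yield $t^u \lesssim \ell^{O_d(1)}$ for each problematic marked subgraph.

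The technical heart of the conjecture is carrying out this case analysis for each $d \ge 3$. Because the number of non-isomorphic subgraphs of $H_d$ obtained by removing $O_d(1)$ edges grows rapidly with $d$, a direct generalization of the case-by-case analysis of Section~\ref{sec:lower} seems forbidding, and progress likely requires either a recursive construction (e.g.\ define $H_{d+1}$ by attaching to each vertex of $H_d$ a common $d$-neighborhood of a $d$-set of its vertices), or a structural criterion on $d$-degenerate graphs that automatically forces the recursion to close and thereby bypasses bespoke subgraph enumeration. This conceptual obstacle—more than any single computation—is the main reason the conjecture remains open.
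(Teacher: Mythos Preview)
The paper does not prove this statement: it is listed as an open conjecture in Section~\ref{sec:closing}, with the $d=2$ case being the only one settled (via Theorem~\ref{thm:lower}). So there is no ``paper's own proof'' to compare against; you are correctly treating it as a proposal rather than a proof.

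That said, the paper's brief speculation points in a different direction from yours. It does \emph{not} suggest the generalized triforce $H_d$; instead it proposes the random $d$-degenerate graph (start from $K_d$ and repeatedly attach a new vertex to $d$ uniformly random existing vertices) as the likely witness. The motivation is exactly the obstacle you flag at the end: for a fixed explicit graph like $H_d$, the enumeration of subgraphs after $O_d(1)$ edge deletions grows badly with $d$ and there is no clean recursion from $H_d$ to $H_{d-1}$, whereas a random construction has the heuristic advantage that local neighborhoods already resemble smaller random $d$-degenerate graphs, so one might hope for a uniform argument rather than bespoke case analysis.

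One technical correction to your sketch: Lemma~\ref{lem:poisson}(1) requires $p^k N\lesssim 1$, so with $N=b^d$ it only bounds the codegree of $k$-sets with $k\ge d$ by $O(\ell)$; for $k<d$ the codegree is polynomially large in $b$. In the $d=2$ proof this is harmless because every invocation is for pairs, but your line ``controls the codegree of any constant-size vertex set in $G(2b^d,p)$ by $O(\ell)$'' overstates what is available. Any higher-$d$ analogue of Lemma~\ref{diamond} would have to track carefully which vertex-set sizes are in play, and this is part of why the direct generalization is not straightforward.
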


There is a natural random process for constructing $d$-degenerate graphs on $n$ vertices. Namely, starting with a $K_d$, $n-d$ vertices are added one at a time, and each new vertex is given $d$ neighbors uniformly at random among the previous ones. If $n$ is sufficiently large, it is plausible that the random $d$-degenerate graph constructed in this manner should satisfy $f(H,p)=b^{d-o(1)}$ asymptotically almost surely.

\vspace{3mm}
\noindent {\bf Acknowledgments.} We would like to thank Jacob Fox, Huy Pham, and Yuval Wigderson for helpful discussions. We would also like to thank an anonymous reviewer.

\end{document}